\documentclass{amsart}
\usepackage[margin=1.01 in]{geometry}

\usepackage{ifxetex,ifluatex}
\if\ifxetex T\else\ifluatex T\else F\fi\fi T%
  \usepackage{fontspec}
\else
  \usepackage[T1]{fontenc}
  \usepackage[utf8]{inputenc}
  \usepackage{lmodern}
  \usepackage{xcolor}
\fi

\usepackage{hyperref}
\usepackage{amsmath}
\usepackage{amssymb}
\usepackage{amsthm}
\usepackage{amssymb}
\usepackage{amsthm}
\usepackage{tikz-cd}
\usetikzlibrary{arrows.meta, positioning}
\usepackage{graphicx}

\DeclareMathOperator{\Gal}{Gal}

\DeclareMathOperator{\Aut}{Aut}

\newcommand{\cO}{ {\mathcal O} }

\newcommand{\bZ} { {\mathbb Z}}

\newcommand{\bQ} { {\mathbb Q}}

\newtheorem{theorem}{Theorem}
\newtheorem{lemma}[theorem]{Lemma}
\newtheorem{proposition}[theorem]{Proposition}
\newtheorem{corollary}[theorem]{Corollary}
\theoremstyle{definition}
\newtheorem{example}[theorem]{Example}
\theoremstyle{remark}
\newtheorem{remark}[theorem]{Remark}

\title[Quadratic Polynomials over $\bQ$ with Surjective Arboreal Galois Representations]{Examples of Quadratic Polynomials over $\bQ$ with Surjective Arboreal Galois Representations}
\author{Luck Henderson}
\author{Jamie Juul}
\author{Brenner Lattin}
\author{Enrique Mercado}
\author{Mia Schaefer}

\begin{document}

\begin{abstract}
We explore families of pairs of quadratic polynomials $f(x)=x^2+c\in \mathbb{Q}$ and $a\in \mathbb{Q}$ with $a$ being a strictly preperiodic point of $f$ to provide infinitely many new examples for which the associated arboreal Galois representations are surjective.
\end{abstract}

\maketitle

\section{Introduction}

Let $f(x)\in \bQ[x]$ and let $a\in \bQ$. Let $\bar{\bQ}$ denote an algebraic closure of $\bQ$ and let \[\cO^-(a) = \{\beta\in \bar{\bQ}: f^n(\beta)=a \text{ for some } n\geq 0\}\] denote the \textit{backward orbit} of $a$ under the map $f$. This backward orbit has a natural structure as a regular $d$-ary rooted tree graph whenever $a$ is not in the forward orbit of a critical point. This tree structure is given by assigning elements of $f^{-n}(a)$ to nodes at the $n$-th level of the tree with an edge between $\beta$ and $f(\beta)$. The extension $\bQ(\cO^-(a))/\bQ$ is a Galois extension and the absolute Galois group $\Gal(\bar{\bQ}/\bQ)$ acts on the backward orbit in a way that preserves the tree structure (see Figure~\ref{fig:tree}. Thus, we get a group homomorphism 
\[\Gal(\bar{\bQ}/\bQ)\rightarrow \Aut(T),\] 
where $T$ denotes the regular $d$-ary rooted tree graph and $\Aut(T)$, its automorphism group. This homomorphism is called an \textit{arboreal Galois representation}. The image of this representation is isomorphic to $\Gal(\bQ(\cO^-(a))/\bQ)$. These Galois representations are conjectured to have images with finite index in $\Aut(T)$ except in very specific cases \cite[Conjecture 3.11]{Jones_survey}, \cite[Question 1.1]{Bridyetal}. See \cite{Jones_survey} for a more detailed introduction to arboreal Galois representations. 

\begin{figure}[h]
% https://tikzcd.yichuanshen.de/#N4Igdg9gJgpgziAXAbVABwnAlgFyxMJZAZgBpiBdUkANwEMAbAVxiRAB12AjJhhmHCAC+pdJlz5CKACykAjFVqMWbTjz4DhokBmx4CRAEzzF9Zq0QduvfoJFi9kogAYT1Mystqbm+zvH6UsgAbG5K5qrWGnbauhIGKACspIamyhZW6rZaDvFBcilpEV5R2X5xgUQA7ORFniB0Of6OCcg1qe7pbABmAHrAALRyQgAUdACUTRVOKDUKncUgfYOGoxNTATMkpM51GZw0UBA4CEKKMFAA5vBEoN0AThAAtkhkIDgQSKEgABYwdFA2JAwKw-A9nkhjO9Pohvn8AUCCKDtOCXogCtCkMlfv9AZZgci7o80bJMYhsfC8eAkU1UV9qB8kK4cQj8TSwcSsQyYczKYiQcIKEIgA
\begin{tikzcd}[sep=small]
                            &                              &                             & \vdots  &                             &                              &                             &           \\
\bullet \arrow[rd, no head] &                              & \bullet \arrow[ld, no head] &         & \bullet \arrow[rd, no head] &                              & \bullet \arrow[ld, no head] & f^{-2}(a) \\
                            & \bullet \arrow[rrd, no head] &                             &         &                             & \bullet \arrow[lld, no head] &                             & f^{-1}(a) \\
                            &                              &                             & \bullet &                             &                              &                             & a        
\end{tikzcd}
\caption{Tree diagram for backward orbit of $a$.}
\label{fig:tree}
\end{figure}
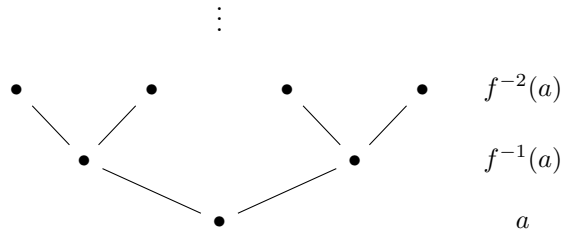

Several sets of sufficient conditions under which these arboreal Galois representations are surjective, especially in the quadratic case, exist in the literature (see for example \cite{odoni}, \cite{Stoll}, \cite{Jones_thedensity} for quadratic examples and \cite{Juuletal_19WIN4}, \cite{Looper}, \cite{Kadets}, \cite{BenedettoJuul_19odoni}, \cite{Specter} for degree 2 and higher degree examples). 
Adding to this body of literature, we prove the following two theorems.

\begin{theorem}\label{thm:main1}
Let $a\in \mathbb{Q}$, let $c=-a-a^2$, and let $f(x)=x^2+c$. Write $a=\frac{r}{s}$ where $r,s\in \mathbb{Z}$, $s>0$, and $\gcd(r,s)=1$.
Define 
\begin{equation*}
\delta = 
\begin{cases}
0 & \text{if } a\in (-\infty, -2)\cup (-2,-1)\cup(-1,0)\cup (1,\infty)\\
1 & \text{if } a\in (0,\beta)
\end{cases}
\end{equation*}
where $\beta =\frac{1}{3} (-2 + (19 - 3\sqrt{33})^\frac{1}{3} + (19 + 3\sqrt{33})^\frac{1}{3}) \approx 0.839$.
Also define
\begin{equation*}
e=\begin{cases}
0 & \text{if } v_2(a)\leq 0\\
1 & \text{if } v_2(a)>0
\end{cases}.
\end{equation*}
Suppose $a-c$ is not a square in $\mathbb{Q}$ and at least one of the following hold:
\begin{enumerate}
\item\label{main1_cond1} $(-1)^\delta 2^e |r|\equiv 2\mod 3$
\item\label{main1_cond2} $(-1)^\delta 2^e |r|\equiv 3\mod 4$
\item\label{main1_cond3} $(-1)^\delta 2^e |r|$ is not a quadratic residue modulo $q$ for some prime $q$ dividing $s$.
\end{enumerate}
Then the associated arboreal Galois representation $\Gal(\bar{\mathbb{Q}}/\mathbb{Q})\rightarrow \Aut(T)$ is surjective.
\end{theorem}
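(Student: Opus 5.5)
The plan is to use the standard discriminant criterion for iterates of quadratic polynomials, in the form used by Stoll and Jones. Write $c_n=f^n(0)$ and $K_n=\bQ(f^{-n}(a))$. If $\Gal(K_{n-1}/\bQ)$ is all of $\Aut(T_{n-1})$, then $\Gal(K_n/\bQ)\cong\Aut(T_n)$ if and only if $c_n-a$ (the discriminant of $f^n(x)-a$ up to squares) is not a square in $K_{n-1}$. Since $\Aut(T_{n-1})$ is an iterated wreath product of $\bZ/2$, the quadratic subfields of $K_{n-1}$ are exactly $\bQ(\sqrt{\cdot})$ of products of the elements $c_1-a,\dots,c_{n-1}-a$. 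Stoll's version of the criterion therefore says the representation is surjective if and only if no nonempty product $\prod_{i\in S}(c_i-a)$ is a square in $\bQ$. The case $n=1$ is exactly the hypothesis that $a-c$, equivalently $c-a=-a(a+2)$ up to sign conventions, is not a square. Everything then reduces to controlling $c_n-a$ modulo squares for $n\ge 2$.

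The key structural fact is that $c=-a-a^2$ forces $f(a)=-a$ and $f(-a)=-a$. So $a$ is strictly preperiodic, $f(x)+a=(x-a)(x+a)$, and
\[
c_{n}-a=f(c_{n-1})-f(-a)\cdot(-1)\cdots
\]
can be rewritten through the factorization
\[
c_n+a=(c_{n-1}-a)(c_{n-1}+a).
\]
Combining this with $c_n-a=(c_n+a)-2a$, I would clear denominators: $c_n=N_n/s^{2^n}$ with $\gcd(N_n,s)=1$. I would then compute the numerator of $c_n-a$ modulo $3$, modulo $4$ and modulo each prime $q\mid s$. The goal is to show that for every $n\ge2$ this numerator is congruent to $(-1)^\delta 2^e|r|$ times a square, with the $2^e$ coming from $v_2(a)$, and modulo $q$ just to $\pm r$ times a power of $r^2$. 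The role of $\delta$ is the sign. I would show by a real-dynamics argument that $c_n-a$ has a constant sign for $n\ge 2$, with $\delta=1$ exactly when $a\in(0,\beta)$; here $\beta$ is the real root of $x^3+2x^2+x-1$ (or whichever cubic the boundary case $c_2=a$ or $c_3=\dots$ produces), which should mark where the orbit of $0$ crosses $a$. The excluded points $a=0,-1,-2$ and $a\in[\beta,1]$ are where $0$ is preperiodic or the sign pattern changes, and these I would set aside.

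With that normal form, each hypothesis kills every product at once. Under (1) or (2), each $c_i-a$ with $i\ge2$ has the form $u\cdot m^2$ with $u\equiv 2\pmod 3$ or $u\equiv 3\pmod 4$. A product over $S$ is then $\pm$(such units)$\times$squares. I need a parity bookkeeping here: combined with $c_1-a$ I would show that the product has either the wrong sign or the wrong residue. Under (3), reduction modulo $q$ shows that each factor is a nonresidue times a square, and the odd-size products are non-squares. For even-size products I would instead use a $q$-adic valuation argument: the denominators $s^{2^n}$ are squares, so I look for a primitive prime divisor of $N_n$ appearing to odd order.

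The main obstacle is the last point. Congruence conditions only detect products with an odd number of factors, so I need an extra ingredient to rule out even products. I would first try to prove that $\gcd$ of the numerators of $c_i-a$ and $c_j-a$ (for $i<j$) divides $2$ and a bounded power of $r$, using the relation $c_n+a=(c_{n-1}-a)(c_{n-1}+a)$. That would make a square product force each factor to be individually a square up to those common primes, and this is ruled out by the congruence.
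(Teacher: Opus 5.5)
Your overall framework (the Stoll/Jones discriminant criterion together with $f(x)+a=(x-a)(x+a)$) is right. However, the congruence step at the center of your plan points the wrong way, so the argument does not go through as written.

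Write $f^n(0)-a=r_n/s^{2^n}$. The recursion $r_{n+1}=r_n^2+2r_nrs^{2^n-1}-2rs^{2^{n+1}-1}$ gives, for every $n\ge 2$:
\begin{itemize}
\item $r_n\equiv 1\pmod 3$ when $3\nmid r$;
\item $r_n\equiv 1\pmod 4$ when $r$ is odd;
\item $r_n\equiv r_{n-1}^2\equiv r^{2^n}\pmod q$ for each prime $q\mid s$ (not $\pm r$ times a square).
\end{itemize}
For example, $a=1/5$ gives $r_2=-239\equiv 1\pmod 3$. So each full numerator is a \emph{residue} modulo $3$, $4$ and $q$. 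These congruences therefore exclude no product of the $D_i$ with $i\ge 2$, odd or even. Your sign/parity bookkeeping also has nothing to bite on: for $\delta=0$ and $S\subseteq\{2,3,\dots\}$ the product is positive and $\equiv 1\pmod 3$. Note too that the first discriminant is $D_1=a-c$, not $c-a$. That sign is not a convention, and it matters for any sign-based argument.

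The non-residue lives only in the primitive part of $r_n$, and reaching it takes more than you propose. Your relation $c_n+a=(c_{n-1}-a)(c_{n-1}+a)$ does give coprimality: if a prime $p$ divides $r_m$ (so $p\nmid s$), then $f^k(0)\equiv -a\pmod p$ for all $k>m$, so $p\mid r_n$ with $n>m$ forces $p\mid 2a$. This is Lemma~\ref{lem:common_primes} specialized. But ``the gcd divides $2$ and a bounded power of $r$'' is too weak. You need the $2r$-part of $r_n$ exactly:
\begin{itemize}
\item $v_p(r_n)=v_p(r)$ for odd $p\mid r$;
\item $v_2(r_n)=v_2(r)+1$ for $n\ge 2$ when $r$ is even;
\item $r_n$ is odd when $r$ is odd.
\end{itemize}
These are the contents of Proposition~\ref{prop:family1primes}. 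Combined with the constant sign $(-1)^\delta$, they give $r_n=(-1)^\delta 2^e|r|t_n$ with the $t_n$ odd, prime to $r$, and pairwise coprime. (Here $f^2(0)-a=a(a^3+2a^2-2)$, so $\beta$ is the real root of $x^3+2x^2-2$, not the cubic you guessed.)

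From this factorization the proof closes cleanly. If $D_i\prod_{j\in S}D_j$ is a square for some $S\subseteq\{1,\dots,i-1\}$, then $t_i$ must be a square; this handles every $S$ at once, with no odd/even split. On the other hand, $r_i$ is a residue and $(-1)^\delta 2^e|r|$ is a non-residue modulo $3$, $4$, or $q$, so $t_i$ is a non-residue, a contradiction.

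Without the exact valuations at primes dividing $2r$, the hypothesis on $(-1)^\delta 2^e|r|$ cannot be transferred to $t_i$. And with your congruences as stated, the final step would make $t_i$ a residue rather than produce a contradiction.
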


\begin{theorem}\label{thm:main2}
Let $a\in \mathbb{Q}$, let $c=-1+a-a^2$, and let $f(x)=x^2+c$. Write $a=\frac{r}{s}$ where $r,s\in \mathbb{Z}$, $s>0$, and $\gcd(r,s)=1$. 
Suppose $a-c$ is not a square in $\mathbb{Q}$ and at least one of the following hold:
\begin{enumerate}
\item\label{main2_cond1} $r=1$ and $s>2$ is even,
\item\label{main2_cond2} $r=2$, $s>3$, and $s\equiv 1\mod 3$,
\item\label{main2_cond3} $r=2$ and $s$ is divisible by a prime $q$ with $q\equiv 3\mod 4$.
\end{enumerate}
Then the associated arboreal Galois representation $\Gal(\bar{\mathbb{Q}}/\mathbb{Q})\rightarrow \Aut(T)$ is surjective.
\end{theorem}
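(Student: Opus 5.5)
The plan is to reduce surjectivity to an explicit arithmetic condition on the critical orbit, and then verify that condition with congruences on numerators.

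\textbf{Setup.} With $c=-1+a-a^2$ we have $f(-a)=a-1$, $f(a-1)=-a$ and $f(a)=a-1$, so $a$ is strictly preperiodic and the critical point $0$ never lands on $a$. The backward orbit is therefore a genuine binary tree.

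\textbf{Reduction to a criterion on $\delta_n$.} Put $K_n=\bQ(f^{-n}(a))$. By the discriminant formula for iterates of $x^2+c$ (Stoll, Jones), $[K_n:K_{n-1}]=2^{2^{n-1}}$ holds exactly when
\[\delta_n:=a-f^n(0)\]
is not a square in $K_{n-1}$. Here $\delta_1=a-c$, which is a non-square by hypothesis. When $G_{n-1}$ is already the full group $\Aut(T_{n-1})$, the quadratic subextensions of $K_{n-1}/\bQ$ are exactly $\bQ\bigl(\sqrt{\prod_{i\in S}\delta_i}\bigr)$ for nonempty $S\subseteq\{1,\dots,n-1\}$. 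So, by induction on $n$, it suffices to show that $\delta_1,\delta_2,\dots$ are $\mathbb F_2$-independent in $\bQ^*/\bQ^{*2}$.

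\textbf{Numerators of the critical orbit.} Write $f^n(0)=N_n/s^{2^n}$, so that
\[N_1=-(s^2-rs+r^2),\qquad N_{n+1}=N_n^2+N_1\,s^{2^{n+1}-2}.\]
Then $\delta_n=-M_n/s^{2^n}$ with $M_n=N_n-r s^{2^n-1}$. Since the denominator is a square, only the class of $-M_n$ matters. The recursion gives the following congruences.
\begin{itemize}
\item Modulo $s$: $N_n\equiv N_1^{2^{n-1}}\equiv r^{2^n}$, so $-M_n\equiv -r^{2^n}\pmod s$. In case~\eqref{main2_cond3}, $-1$ is a non-residue modulo $q\equiv 3 \bmod 4$, so $-M_n$ is a non-residue mod $q$.
\item Case~\eqref{main2_cond2} ($r=2$, $s\equiv1 \bmod 3$): $N_1\equiv0\pmod 3$, hence $N_n\equiv0\pmod 3$ for all $n$, and $-M_n\equiv 2\pmod 3$.
\item Case~\eqref{main2_cond1} ($r=1$, $s$ even): for $n\ge2$, $N_n$ is an odd square plus a multiple of $4$, and $s^{2^n-1}\equiv0\pmod 8$. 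Hence $-M_n\equiv -1\pmod 4$.
\end{itemize}
In every case, each $\delta_n$ is individually a non-square in $\bQ$, obstructed at the prime $q$, at $3$, or at $2$.

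\textbf{Main obstacle: independence.} The hard step is upgrading ``each $\delta_n$ is a non-square'' to full independence of all the $\delta_n$. A product of an even number of the $\delta_i$ has the residue obstruction cancel, so the congruences alone do not suffice. My first attempt will be to find, for each $n\ge2$, a prime $p_n$ with $v_{p_n}(M_n)$ odd and $p_n\nmid M_i$ for $i<n$; such a prime makes $\delta_n$ independent of $\delta_1,\dots,\delta_{n-1}$ automatically.
\begin{itemize}
\item For the ``$p_n\nmid M_i$'' part, use the identity $f(x)-f(y)=(x-y)(x+y)$ to get
\[\delta_{n+1}=(a-1)-f^{n+1}(0)-1+\cdots,\]
and more usefully $f^{n}(0)-f^{m}(0)=(f^{n-1}(0)-f^{m-1}(0))(f^{n-1}(0)+f^{m-1}(0))$. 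From this, together with the eventual $2$-cycle $\{-a,a-1\}$ in the forward orbit of $a$, deduce that $\gcd(M_n,M_m)$ is supported on primes dividing $s$ or $N_1$, i.e.\ a rigid-divisibility statement for this orbit.
\item For the odd-valuation part, use the residue computations above. They show that the part of $-M_n$ coprime to the earlier terms cannot be a square, because it inherits the non-residue class modulo $q$, $3$ or $4$ (the common factors are squares or are controlled).
\end{itemize}
Independence then follows, and Theorem~\ref{thm:main2} results. The gcd/rigid-divisibility step is the one I expect to require the most care.
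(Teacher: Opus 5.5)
Your outline has the same architecture as the paper's proof. It is not yet a proof, though: the step that turns your congruences into non-squareness needs the sign of $f^n(0)-a$, which you never establish. Start with a sign slip in your criterion: $\prod_{\beta\in f^{-(n-1)}(a)}(\beta-c)=(-1)^{2^{n-1}}\bigl(f^n(0)-a\bigr)$. So for $n\ge 2$ the quantity to test is $D_n=f^n(0)-a=M_n/s^{2^n}$, not $a-f^n(0)$.

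Read for $D_n$, your congruences are no obstruction at all. For $n\ge2$, $M_n$ is a nonzero square mod $q$, $M_n\equiv 1\pmod 3$, and $M_n\equiv 1\pmod 4$ in the respective cases. The argument closes only because $M_n<0$. Then the positive integer $|M_n|=-M_n$ lies in a non-residue class. Its part coprime to the other terms is therefore a positive integer in the same class, and so has a prime of odd exponent. Your final step, that the part of $-M_n$ coprime to earlier terms ``cannot be a square'', yields such a prime only if that part is positive; a negative non-square like $-9$ gives nothing.

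The paper supplies the sign in Proposition~\ref{prop:family2signs}. For $0<a<\gamma\approx 1.54$ (all three cases have $0<a<1$) one has $f^2(0)<a\le -c$. Induction then gives $c\le f^n(0)\le f^2(0)$, so $f^n(0)-a<0$ for all $n\ge 1$.

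Second, your rigid-divisibility claim, that $\gcd(M_m,M_n)$ is supported on primes dividing $s$ or $N_1$, is false for $r=2$. For $a=2/3$ one has $M_2=-68$ and $M_4=-38099948$, both divisible by $4$, while $sN_1=-21$. The correct statement comes from reduction mod $p$, as in Lemma~\ref{lem:common_primes}. Suppose $p\mid M_m$ and $p\mid M_n$ with $m<n$. Then $p\nmid s$ and $f^m(0)\equiv a$, so $a\equiv f^{n-m}(a)\in\{a-1,-a\}$ modulo $p$, which forces $p\mid 2r$.

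So for $r=2$ the prime $2$ genuinely is shared, and you must compute its exponent. One has $v_2(M_n)=0$ for odd $n$ and $v_2(M_n)=2$ for even $n$. The latter holds because $M_{n-1}^2-s^{2^n}\equiv 0\pmod 8$ while $v_2(2M_{n-1}rs^{2^{n-1}-1})=2$. The evenness of this exponent is what lets the non-residue class of $-M_n$ pass to its odd part $t_n$. Otherwise the class could flip, since $2$ is a non-residue mod $3$ and mod any prime $q\equiv 3\pmod 8$. Your parenthetical ``the common factors are squares or are controlled'' is exactly this unproved step, and it contradicts your own gcd claim. With the sign lemma and the corrected shared-prime and $2$-adic analysis, your prime-$p_n$ strategy becomes the paper's proof.
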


Note, we work with parameters $c$ in $\mathbb{Q}$, rather than $\mathbb{Z}$, where much of the previous work in this direction has been focused. This allows us to generate many examples with parameters $c$ with small absolute value and/or small height. For example, we can construct infinitely many examples with $-2\leq c\leq \frac{1}{4}$, which means $c$ will be in the Mandelbrot set, so the Julia set of the map (which is approximated by a random backward orbit) will be connected (see Figure~\ref{fig:julia} below). 

\begin{example} We give several examples where these theorems imply $\Gal(\bar{\mathbb{Q}}/\mathbb{Q})\rightarrow \Aut(T)$ is surjective.
\begin{itemize}
\item Let $a=\frac{1}{5}$ and let $c= -\frac{6}{25}$. Then $a-c=\frac{11}{25}$ and $(-1)^1\cdot 2^0\cdot |1|\equiv 2\mod 3$, so Theorem~\ref{thm:main1}, condition (\ref{main1_cond1}) is met.
\item Let $a=\frac{1}{2}$ and let $c=-a^2-a = -\frac{3}{4}$. Then $a-c = \frac{5}{4}$ and $(-1)^1 \cdot 2^0\cdot |1|\equiv 3\mod 4$, so Theorem~\ref{thm:main1}, condition (\ref{main1_cond2}) is met.
\item Let $a=-\frac{6}{7}$ and let $c=-a^2-a=\frac{6}{49}$. Then $a-c = -\frac{48}{49}$ and $(-1)^0\cdot 2^1\cdot |6|\equiv 5\mod 7$, so Theorem~\ref{thm:main1},  condition (\ref{main1_cond3}) is met.
\item Let $a=\frac{1}{4}$ and let $c=-\frac{13}{16}$. Then $a-c=\frac{17}{16}$ and Theorem~\ref{thm:main2},  condition (\ref{main2_cond1}) is met.
\item Let $a=\frac{2}{13}$ and let $c=-\frac{147}{169}$. Then $a-c=\frac{173}{169}$ and Theorem~\ref{thm:main2},  condition (\ref{main2_cond2}) is met
\item Let $a=\frac{2}{3}$ and let $c=-\frac{7}{9}$. Then $a-c=\frac{13}{9}$ and Theorem~\ref{thm:main2},  condition (\ref{main2_cond3}) is met.
\end{itemize}
\end{example}

\begin{figure}[h]
\includegraphics[scale=.4]{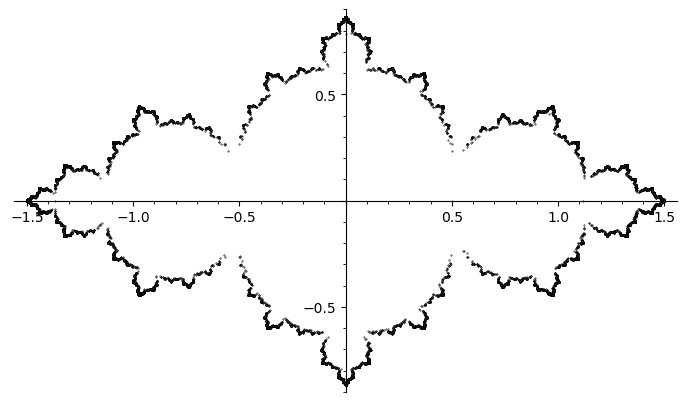}
\caption{Julia set of $f(x)=x^2-\frac{3}{4}$ approximated by backward orbit of $a=\frac{1}{2}$.}
\label{fig:julia}
\end{figure}

Our proof relies on a well known argument, originally due to Odoni \cite{odoni, odoni_realizing} and extended by Stoll \cite{Stoll}, Jones \cite{Jones_survey}, and others showing the arboreal Galois representation for a quadratic polynomial $f(x)=x^2+c$ is surjective if and only if the sequence $a-c,f(0)-a,f^2(0)-a, f^3(0)-a,\dots$ is $2$-independent. We state this result as Lemma~\ref{lemma:surjective} and refer the reader to \cite{BGJT_25PCFper} for a full proof or one of the papers cited above for various versions of the argument.
Inspired by a strategy of Odoni \cite{odoni} and Jones \cite{Jones_thedensity}, to generate examples satisfying the hypotheses of Lemma~\ref{lemma:surjective} we construct our families so that the base point $a$ is strictly preperiodic, which allows us to control the repeated prime factors of the terms of sequence $f^n(0)-a$. We construct the families in Section~\ref{sec:families} and study the prime divisors of the sequence in Section~\ref{sec:repeatedprimes}. We then analyze when the elements of this sequence can be shown to be positive or negative in Section~\ref{sec:signs}. Finally, in Section~\ref{sec:proofs}, we put the work of the previous sections together to prove Theorem~\ref{thm:main1} and Theorem~\ref{thm:main2}.

\subsection*{Acknowledgements} The authors would like to thank Paul Fili for asking the question that led to this project and for the Sage code that was used to generate Figure~\ref{fig:julia}.

\section{Families of quadratic polynomials with preperiodic points over $\mathbb{Q}$}\label{sec:families}

In this section, we parametrize pairs $f(x)=x^2+c$, $a$ where $a$ is a strictly preperiodic point of $f$ with tail length 1 and cycle length 1 or 2. 

\begin{proposition}
Let $f(x)=x^2+c$ and let $a$ be a strictly preperiodic point with tail length 1 and cycle length 1. Then \[c=-a-a^2\] with $a\neq 0$.
\end{proposition}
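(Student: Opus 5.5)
Unwinding the definition of tail length 1 and cycle length 1 reduces the statement to a pair of polynomial equations in $a$ and $c$. Saying $a$ is strictly preperiodic with tail length $1$ and cycle length $1$ means that $b:=f(a)$ is a fixed point of $f$ while $a$ itself is not periodic; in particular $a\neq b$. Thus the conditions are
\[
f(f(a))=f(a) \quad\text{and}\quad f(a)\neq a .
\]

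First, since $f(x)=x^2+c$ is even, the equation $f(x)=b$ has exactly the two roots $\pm\sqrt{b-c}$. The fixed point $b$ itself is one of these roots, because $f(b)=b$. Since $a\neq b$ and $f(a)=b$, the point $a$ must be the other root, so $a=-b$. This forces $a\neq 0$: if $a=0$, then the two roots coincide, so $a=b$ and $a$ would be fixed, not strictly preperiodic.

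Second, I substitute $b=-a$ into $f(a)=b$. This gives $a^2+c=-a$, that is, $c=-a-a^2$, which is the claimed formula.

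Third, I check the converse, which is worth recording even though the statement only asks for the forward direction. For $c=-a-a^2$ with $a\neq 0$, we have $f(a)=-a$ and $f(-a)=a^2-a-a^2=-a$. So $-a$ is fixed and $a\neq -a$, and $a$ is therefore strictly preperiodic with the stated tail and cycle lengths.

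There is no real obstacle here. The only point needing care is justifying $a=-f(a)$, which rests on the parity of $f$ together with the strict preperiodicity assumption $a\neq f(a)$.
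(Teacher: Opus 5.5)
Your proof is correct and is essentially the paper's argument. The paper solves $f^2(a)=f(a)$ directly, gets $c=-a-a^2$ or $c=a-a^2$, and discards the latter because it makes $a$ fixed; your parity observation is the factorization $f(f(a))-f(a)=(f(a)-a)(f(a)+a)$ behind those two solutions, and your converse check matches the paper's closing verification.
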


\begin{proof}
Since the orbit of $a$ has tail length 1 and cycle length 1, we have $f^2(a)=f(a)$, that is,
\[(a^2+c)^2+c=a^2+c.\]
The solutions to this equation are
\[c=\begin{cases} -a-a^2 \\ a-a^2 \end{cases}.\]
When $c=a-a^2$, we have $f(a)=a$, but we have assumed $a$ is not periodic. When $c=-a-a^2$, we have $f(a)=-a$ and $f^2(a)=-a$, hence if $a\neq 0$,
then $a$ is strictly preperiodic with the desired orbit (shown in Figure~\ref{fig:cycle_length_1}).
\end{proof}

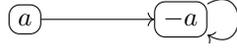
\begin{figure}[h]
\begin{tikzpicture}[
    node distance=1.5cm,
    every node/.style={draw, rectangle, rounded corners, align=center}
]
\node (a) {$a$};
\node (neg_a) [right=of a] {$-a$};
\draw[->] (a) to (neg_a);
\draw[->] (neg_a) to [looseness=4, out=30,  in=-30] (neg_a);
\end{tikzpicture}
\caption{Orbit of $a$ for $f(x)=x^2+(-a-a^2)$.}
\label{fig:cycle_length_1}
\end{figure}

We now consider the case where the orbit of $a$ has tail length 1 and cycle length 2.

\begin{proposition}
Let $f(x)=x^2+c$ and let $a$ be a strictly preperiodic point with tail length 1 and cycle length 2. Then \[c=-1+a-a^2\] with $a\neq 0, \frac{1}{2}$.
\end{proposition}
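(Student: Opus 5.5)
Following the proof of the previous proposition, I would translate the dynamical condition into a polynomial equation in $c$ and $a$ and factor it. Tail length 1 and cycle length 2 mean that $f(a)$ is a periodic point of exact period $2$ and $a$ is not periodic. So $f^3(a)=f(a)$, that is,
\[ f^2(a^2+c)=a^2+c, \]
while $f(a)\neq f^2(a)$. Write $b=f(a)=a^2+c$. Then $b$ satisfies $f^2(b)=b$ but $f(b)\neq b$. The standard factorization
\[ f^2(x)-x=(x^2-x+c)(x^2+x+c+1) \]
then says $b$ is a root of $x^2+x+c+1$. This gives the relation
\[ (a^2+c)^2+(a^2+c)+c+1=0. \]

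Next I view this as a quadratic in $c$:
\[ c^2+(2a^2+2)c+(a^4+a^2+1)=0. \]
Its discriminant is $(2a^2+2)^2-4(a^4+a^2+1)=4a^2$, so
\[ c=-a^2-1\pm a. \]
Both roots should be checked directly.
\begin{itemize}
\item If $c=-1-a-a^2$, then $f(a)=-1-a$. The $2$-cycle $\{b,-1-b\}$ then contains $-1-b=a$, so $a$ itself is periodic. This root must be discarded.
\item If $c=-1+a-a^2$, then $f(a)=a-1$ and $f(a-1)=(a-1)^2-1+a-a^2=-a$. Also $f(-a)=a^2-1+a-a^2=a-1$. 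So the orbit is $a\mapsto a-1\mapsto -a\mapsto a-1\mapsto\cdots$.
\end{itemize}

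Finally I would check the degenerate cases, which I expect to be the only point needing care. The two points of the cycle coincide exactly when $a-1=-a$, i.e. $a=\tfrac12$; then the cycle collapses to a fixed point, so $a=\tfrac12$ must be excluded. The point $a$ lies on the cycle exactly when $a=-a$ or $a=a-1$, which forces $a=0$; then $a$ is periodic, so $a=0$ is excluded. For all other $a$, the point $a$ is strictly preperiodic with tail length $1$ and cycle length $2$.
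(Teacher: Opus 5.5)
Your proof is correct and follows essentially the same route as the paper. Both impose $f^3(a)=f(a)$, solve for $c$, discard the roots that make $a$ periodic, and check that $c=-1+a-a^2$ gives the orbit $a\mapsto a-1\mapsto -a\mapsto a-1$ exactly when $a\neq 0,\tfrac12$. The one difference is that you use the factorization $f^2(x)-x=(x^2-x+c)(x^2+x+c+1)$ at $x=f(a)$ to rule out the fixed-point roots $c=\pm a-a^2$ at the start. You therefore solve only a quadratic in $c$, whereas the paper lists all four roots of the quartic without deriving them.
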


\begin{proof}
Since the orbit of $a$ has tail length 1 and orbit length 2, we have $f^3(a)=f(a)$, so 
\[((a^2+c)^2+c)^2+c=a^2+c\]
This has solutions
\[c=\begin{cases}
-1-a-a^2\\
-1+a-a^2\\
-a-a^2\\
a-a^2.
\end{cases}\]
The first and last solutions above make $a$ periodic (with period 2 and period 1 respectively), while we have seen the third solution corresponds to $a$ strictly periodic with tail length 1 and cycle length 1. When $c=-1+a-a^2$, we have $f(a)=a-1$, $f^2(a)=-a$, and $f^3(a)=a-1=f(a)$, which gives the desired orbit (shown in Figure~\ref{fig:cycle_length_2}) as long as $a\neq 0$ (so $a\neq -a$) and $a\neq \frac{1}{2}$ (so $-a\neq a-1$).
\end{proof}

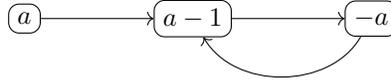
\begin{figure}[h]
\begin{tikzpicture}[
    node distance=1.5cm,
    every node/.style={draw, rectangle, rounded corners, align=center}
]

\node (a) {$a$};
\node (a1) [right=of a] {$a-1$};
\node (neg_a) [right=of a1] {$-a$};

\draw[->] (a) to (a1);
\draw[->] (a1) to (neg_a);
\draw[->] (neg_a) to [out=-120,  in=-60] (a1);

\end{tikzpicture}
\caption{Orbit of $a$ for $f(x)=x^2+(-1+a-a^2)$.}
\label{fig:cycle_length_2}
\end{figure}

Rational preperiodic points of rational quadratic maps have been studied extensively prior to this paper. In \cite{Poonen_classification}, Poonen classifies the quadratic polynomials $x^2+c\in \mathbb{Q}[x]$ which have rational periodic points. He shows $x^2+c\in \mathbb{Q}[x]$ has a rational fixed point if and only if $c=\frac{1}{4}-\rho^2$ for some $\rho\in \mathbb{Q}$ and in this case, it has $2$ rational fixed points $\frac{1}{2}-\rho$ and $\frac{1}{2}+\rho$ and the strictly periodic points mapping into these cycles with tail length 1 are $-(\frac{1}{2}-\rho)$ and $-(\frac{1}{2}+\rho)$ respectively. 
Comparing this classification with our first family, taking $a=-(\frac{1}{2}+\rho)$ and $c=\frac{1}{4}-\rho^2=-a^2-a$ produces our family of strictly preperiodic point, polynomial pairs with the specified orbit. The other strictly preperiodic point for this map is produced by choosing $a'=-1-a=-(\frac{1}{2}+\rho)$.  

Poonen also shows $x^2+c\in \mathbb{Q}[x]$ has a rational point of period 2 if and only if $c=-\frac{3}{4}+\sigma^2$ for some $\sigma\in \mathbb{Q}\setminus \{0\}$, and in this case there are exactly two $-\frac{1}{2}+\sigma$ and $-\frac{1}{2}-\sigma$, which have strictly preperiodic preimages $\frac{1}{2}-\sigma$ and $\frac{1}{2}+\sigma$ respectively. We recover our classification of point, polynomial pairs by taking $a$ to be either of these, choosing $a'=1-a$ produces the other point, polynomial pair with the same polynomial.

Note, Poonen also classifies $x^2+c\in \mathbb{Q}[x]$ which has a rational point of period 3. We omit this case, but it would be another interesting source of examples. Conjecturally, a quadratic polynomial with coefficients in $\mathbb{Q}$ has no points of period greater than $3$ \cite{FlynnPoonenSchaefer}.
Moreover, Poonen classifies when there will be a strictly preperiodic point with tail length greater than one. For our families, this corresponds to when $a-c$ is a square in $\mathbb{Q}$. In these cases the Galois representation will not be surjective, but one could consider the backward orbit of the preimages of $a$ to look for further examples. 

\section{Repeated prime divisors in adjusted critical orbits}\label{sec:repeatedprimes}

We first show, for primes outside a finite set, if a prime $p$ divides the numerator of two terms in the sequence $\{f^n(0)-a\}_{n\geq 1}$, then it must divide the numerator of $f^n(a)-a$ for some $n\geq 1$. We then apply this result to understand the prime divisors of this sequence for the families introduced in Section~\ref{sec:families}. Let $\bZ_{(p)}$ denote the localization of $\bZ$ at $(p)$.

\begin{lemma}\label{lem:common_primes} Let $p\in \bZ$ be prime. Suppose $f(x)\in \bZ_{(p)}[x]$ and $a\in \bZ_{(p)}$. If $v_p(f^m(0)-a), v_p(f^n(0)-a)>0$ for $m,n\geq 1$ with $m\neq n$, then $v_p(f^\ell(a)-a)>0$ for some $\ell\geq 1$.
\end{lemma}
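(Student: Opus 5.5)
Without loss of generality take $m<n$, and work modulo $p$, i.e. with the reduction $\bar f\in \mathbb{F}_p[x]$. The reduction is defined because $f\in \bZ_{(p)}[x]$ and $a\in\bZ_{(p)}$. The hypotheses then say $\bar f^m(0)=\bar a=\bar f^n(0)$ in $\mathbb{F}_p$. The plan is to transport this coincidence along the forward orbit so that it becomes a statement about the orbit of $a$ itself.

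First, since reduction commutes with composition, $f^k(x)\bmod p=\bar f^k(\bar x)$ for every $x\in\bZ_{(p)}$ and $k\ge0$. Hence $v_p(g(x)-g(y))>0$ whenever $v_p(x-y)>0$, for any $g\in\bZ_{(p)}[x]$, because $x-y$ divides $g(x)-g(y)$ in $\bZ_{(p)}$. Apply this with $g=f^{\,n-m}$, $x=f^m(0)$ and $y=a$. Both lie in $\bZ_{(p)}$, since $0,a\in\bZ_{(p)}$ and $f$ preserves $\bZ_{(p)}$. This gives $f^{n}(0)\equiv f^{n-m}(a)\pmod p$.

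Combining this with the hypothesis $f^n(0)\equiv a\pmod p$ yields $f^{n-m}(a)\equiv a\pmod p$, so $v_p(f^{\ell}(a)-a)>0$ with $\ell=n-m\ge1$. The case $m>n$ is symmetric, with $\ell=m-n$.

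There is no real obstacle here. The only point needing care is checking that every quantity involved lies in $\bZ_{(p)}$, so that congruences modulo $p$ make sense and the factorization $x-y \mid g(x)-g(y)$ is valid in $\bZ_{(p)}$.
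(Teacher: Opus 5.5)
Your proof is correct and is essentially the paper's argument. The paper also reduces modulo $p$ and writes $\bar f^{m+\ell}(\bar 0)=\bar f^{\ell}(\bar f^{m}(\bar 0))=\bar f^{\ell}(\bar a)$, then compares with $\bar f^{m+\ell}(\bar 0)=\bar a$ to get $\ell=n-m$.
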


\begin{proof}
Suppose $v_p(f^m(0)-a), v_p(f^n(0)-a)>0$ for $1\leq m < n$. Write $n = m+\ell$ for some $\ell \geq 1$. Then in the residue field $\bZ_{(p)}/(p)$, we have $\bar{f}^m(\bar{0})-\bar{a}=\bar{f}^{m+\ell}(\bar{0})-\bar{a}=\bar{0}$. Thus, $\bar{f}^m(\bar{0})=\bar{a}$ and $\bar{0}=\bar{f}^{m+\ell}(\bar{0})-\bar{a}=\bar{f}^\ell(\bar{f}^m(\bar{0}))-\bar{a}=\bar{f}^\ell(\bar{a})-\bar{a}$. Therefore, $v_p(f^\ell(a)-a)>0$.
\end{proof}

Using Lemma~\ref{lem:common_primes}, we show for a polynomial of the form $f(x)=x^2-a-a^2$, the only primes that divide the numerator of $f^n(0)-a$ for more than one $n$ are those dividing the numerator of $a$. We also analyze the $p$-adic valuation of the numerator of $f^n(0)-a$ for these primes.

\begin{proposition}\label{prop:family1primes}
Fix a polynomial in the family $f(x)=x^2+c$ with  $c=-a-a^2$. Let $p\in \mathbb{Z}$ be prime. 
\begin{enumerate}
\item\label{item:primesindenom} The following are equivalent:
  \begin{itemize}
    \item $v_p(a)<0$,
    \item $v_p(f^n(0)-a)<0$ for some $n\geq 1$,
    \item $v_p(f^n(0)-a)<0$ for all $n\geq 1$.
   \end{itemize}
\item\label{item:aodd} If $v_2(a)=0$, then $v_2(f^n(0)-a)=0$ for all $n$.
\item\label{item:aeven} If $v_2(a)\geq 1$, then $v_2(f^n(0)-a)=v_2(a)+1$ for all $n\geq 2$.
\item\label{item:primesinnum} If $v_p(a)>0$ and $p\neq 2$, then $v_p(f^n(0)-a)=v_p(a)$ for all $n\geq 1$.
\item\label{item:repeatedprimes} If $v_p(f^m(0)-a), v_p(f^n(0)-a)>0$ for $m,n\geq 1$ with $m\neq n$, then $v_p(a)>0$.
\end{enumerate}
\end{proposition}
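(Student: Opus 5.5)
The plan is to work with the critical orbit $u_n=f^n(0)$ and use the two facts specific to this family: $f(a)=-a$, and $-a$ is fixed, so $f^\ell(a)=-a$ for every $\ell\ge 1$. The computations rest on three identities:
\[u_1=c=-a(a+1), \qquad u_1-a=-a(a+2), \qquad u_n-a=(u_{n-1}-a)(u_{n-1}+a)-2a \quad (n\ge 2).\]
The last identity is just $u_n-a=u_{n-1}^2+c-a=u_{n-1}^2-a^2-2a$. Each part of the proposition then becomes a valuation comparison between the terms $u_{n-1}^2-a^2$ and $2a$.

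For (\ref{item:primesindenom}), suppose first that $v=v_p(a)<0$. Then $v_p(c)=2v$, and by induction $v_p(u_n)=2^n v$, because the square $u_{n-1}^2$ strictly dominates $c$ once $n\ge 2$. Hence $v_p(u_n-a)=2^nv<0$ for all $n$. Conversely, if $v_p(a)\ge 0$, then $f\in\bZ_{(p)}[x]$, so every $u_n-a$ is $p$-integral. This gives the three equivalences.

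For (\ref{item:aodd}), note that $a$ odd makes $c=-a(1+a)$ even. Then every $u_n$ is even, so $u_n-a$ is odd.

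For (\ref{item:primesinnum}), take $p$ odd and $v_p(a)=k>0$. Then $v_p(c)=k$, and by induction $v_p(u_n)\ge k$. The case $n=1$ follows from $u_1-a=-a(a+2)$ with $a+2$ a $p$-unit. For $n\ge2$ the term $u_{n-1}^2-a^2$ has valuation $\ge 2k>k=v_p(2a)$, so $v_p(u_n-a)=k$.

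For (\ref{item:aeven}), take $v_2(a)=k\ge1$. Again $v_2(u_n)\ge k$ for all $n$, and we need $v_2(u_{n-1}^2-a^2)>k+1=v_2(2a)$ for $n\ge 2$. If $k\ge 2$ this holds since $2k\ge k+2$. If $k=1$, both $u_{n-1}$ and $a$ are $\equiv 2 \pmod 4$, so $u_{n-1}\pm a$ are both divisible by $4$ and the product has valuation $\ge 4>2$.

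This case split is the step I expect to need the most care. It is also the reason $n=1$ is excluded: when $k=1$, $v_2(a+2)$ can exceed $1$.

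For (\ref{item:repeatedprimes}), part (\ref{item:primesindenom}) shows that both valuations being positive forces $v_p(a)\ge 0$. So $f\in\bZ_{(p)}[x]$ and $a\in\bZ_{(p)}$, and Lemma~\ref{lem:common_primes} gives $v_p(f^\ell(a)-a)>0$ for some $\ell\ge1$. Since $f^\ell(a)=-a$, this says $v_p(2a)>0$. If $p$ is odd, this gives $v_p(a)>0$ directly. If $p=2$ and $v_2(a)=0$, part (\ref{item:aodd}) says every $u_n-a$ is a $2$-adic unit, contradicting the hypothesis. Hence $v_2(a)>0$ in that case too.
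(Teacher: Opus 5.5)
Your proof is correct and follows essentially the same route as the paper: your identity $u_n-a=(u_{n-1}-a)(u_{n-1}+a)-2a$ is the paper's recursion $r_{n+1}=r_n^2+2r_nrs^{2^n-1}-2rs^{2^{n+1}-1}$ with the denominators $s^{2^n}$ not cleared, your part (1) is the prime-by-prime form of the paper's computation $s_n=s^{2^n}$, and part (5) uses Lemma~\ref{lem:common_primes} in the same way. One detail should be stated explicitly: in the $k=1$ case of (3) you need $v_2(u_{n-1})=1$ exactly, not just $\geq 1$, and this holds because $v_2(c)=v_2(a)+v_2(1+a)=k$ while $u_{n-1}^2$ has strictly larger valuation.
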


\begin{proof}
Write $a=\frac{r}{s}$ with $\gcd(r,s)=1$ and $s>0$. Similarly, for $n\geq 0$, write $f^n(0)-a=\frac{r_n}{s_n}$ with $\gcd(r_n,s_n)=1$ and $s_n>0$. Note $r_0=-r$ and $s_0=s$. 

We first claim $s_n=s^{2^n}$. We proceed by induction on $n$. The claim holds for $n=0$. Suppose $s_n=s^{2^n}$ for some $n\geq 0$. Then 
\begin{align*}
\frac{r_{n+1}}{s_{n+1}} &= f^{n+1}(0)-a\\
&=f(f^n(0)-a+a)-a\\
&=f\left(\frac{r_n}{s^{2^n}}+\frac{r}{s}\right)-\frac{r}{s}\\
&=\left(\frac{r_n}{s^{2^n}}+\frac{r}{s}\right)^2-\frac{r^2}{s^2}-\frac{2r}{s}\\
&=\frac{r_n^2+2r_n r s^{2^{n}-1}-2rs^{2^{n+1}-1}}{s^{2^{n+1}}}.
\end{align*}
Since $\gcd(r_n, s^{2n})=1$ by hypothesis, we can see \[\gcd(r_n^2+2r_n r s^{2^n-1}-2rs^{2^{n+1}-1}, s)=1,\] hence, \[r_{n+1}=r_n^2+2r_n r s^{2^n-1}-2rs^{2^{n+1}-1}\] and $s_{n+1}=s^{2^{n+1}}$.
Therefore (\ref{item:primesindenom}) holds.

If $v_2(a)=0$, then $v_2(r_0)=v_2(r)=0$. By the formula  $r_{n+1}=r_n^2+2r_n r s^{2^n-1}-2rs^{2^{n+1}-1}$ and induction on $n$, (\ref{item:aodd}) holds.

We now consider the $2$-adic valuation of $f^n(0)-a$ when $v_2(a)>0$. If $v_2(a)=v_2(r)=1$, then \[v_2(r_1)=v_2(-r^2-2rs)\geq \min\{v_2(r^2),v_2(2rs)\}=2= 1+v_2(r).\] If $v_2(a)=v_2(r)\geq 2$, then \[v_2(r_1)=v_2(-r^2-2rs)=v_2(2rs)=1+v_2(r),\] since $v_2(r^2)=2v_2(r)>1+v_2(r)$. We now prove (\ref{item:aeven})  by induction on $n$, suppose $v_2(r_n)\geq 1+v_2(r)$ for some $n\geq 1$. Then 
\[v_2(r_{n+1})=v_2(r_n^2+2r_n r s^{2^n-1}-2rs^{2^{n+1}-1})=v_2(2rs^{2^{n+1}-1})=1+v_2(r),\] since $v_2(r_n^2)=2v_2(r_n)$ and $v_2(2r_nrs^{2^{n}-1})=1+v_2(r_n)+v_2(r)$, which are both strictly greater than $1+v_2(r)$.

Now suppose $p\neq 2$ and $v_p(r_n)=v_p(r)>0$ for some $n\geq 0$. Then
\[v_p(r_{n+1})=v_p(r_n^2+2r_n r s^{2^n-1}-2rs^{2^{n+1}-1})= v_p(2rs^{2^{n+1}-1})=v_p(r)\]
since $v_p(r_n^2)=v_p(2r_n r s^{2^n-1})=2v_p(r)>v_p(r)$. Hence (\ref{item:primesinnum}) holds by induction.

Finally, suppose $v_p(f^m(0)-a), v_p(f^n(0)-a)>0$ for $m\neq n$. Note, $p\nmid s$ by (\ref{item:primesindenom}), so $f(x)\in \bZ_{(p)}$ and $a\in \bZ_{(p)}$. Hence by Lemma~\ref{lem:common_primes}, $v_p(f^\ell(a)-a)>0$ for some $n$. We constructed $f$ so that $f^\ell(a) = -a$ for all $\ell>0$, and hence $f^\ell(a)-a = -2a$ for all $\ell>0$. So we have $v_p(-2a)>0$.  
It follows immediately that if $p$ is odd, then $v_p(a)>0$. On the other hand, if $p=2$, then since $v_2(f^m(0)-a)>0$, it follows from (\ref{item:aodd}) and (\ref{item:primesindenom}) that $v_2(a)>0$. Therefore, (\ref{item:repeatedprimes}) holds.
\end{proof}

\begin{corollary}\label{cor:r_nfam1} With notation as in Proposition~\ref{prop:family1primes}, we can write $|r_1|=2^{e_1}|r|t_1$ for some $e_1,t_1\in \mathbb{Z}$ with $e_1\geq 0$ and $t_1>0$ and we can write $|r_n|=2^e|r|t_n$ where 
\begin{align*}
e=\begin{cases}
0 & \text{if } v_2(a)=0\\
1 & \text{if } v_2(a)\geq 1
\end{cases},
\end{align*} $t_n\in \mathbb{Z}$, and $t_n>0$. Further, for all $i\geq 1$, $2\nmid t_i$, $\gcd(t_i,r)=1$, and $\gcd(t_i,t_j)=1$ for all $i>j\geq 1$.
\end{corollary}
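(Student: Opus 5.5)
The plan is to read everything off from the recursion for $r_n$ in the proof of Proposition~\ref{prop:family1primes} together with the valuation statements (\ref{item:aodd})--(\ref{item:repeatedprimes}) of that proposition.

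\emph{Step 1: $r\mid r_n$ and $r_n\neq 0$.} Starting from $r_0=-r$, the recursion gives
\[r_1=r^2-2r^2-2rs=-r(r+2s),\]
and the recursion $r_{n+1}=r_n^2+2r_nrs^{2^n-1}-2rs^{2^{n+1}-1}$ shows by induction that $r\mid r_n$ for all $n\ge 0$. Next, $r_n\neq 0$ for all $n\geq 1$:
\begin{itemize}
\item if $r$ is odd, then $r_n$ is odd by (\ref{item:aodd});
\item if $r$ is even, then $v_2(r_n)=1+v_2(r)<\infty$ for $n\ge 2$ by (\ref{item:aeven}), and $r_1=-r(r+2s)\ne0$ because $r+2s$ is odd.
\end{itemize}
(Note $r\neq0$ since $a\neq0$.) Hence $|r_n|/|r|$ is a positive integer for all $n\geq 1$.

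\emph{Step 2: define $e_1$, $t_1$, and $e$, $t_n$.} For $n=1$, let $e_1=v_2(r_1)-v_2(r)\ge 0$ and $t_1=|r_1|/(2^{e_1}|r|)$, which is odd by construction. For $n\ge 2$, set $t_n=|r_n|/(2^e|r|)$. To see $t_n$ is an odd integer:
\begin{itemize}
\item if $v_2(a)=0$, then $v_2(r_n)=0=v_2(r)$ by (\ref{item:aodd}), so with $e=0$ we get $v_2(t_n)=0$;
\item if $v_2(a)\ge1$, then (\ref{item:aeven}) gives $v_2(r_n)=1+v_2(r)$, so with $e=1$ we again get $v_2(t_n)=0$.
\end{itemize}
When $v_2(a)=0$, (\ref{item:aodd}) also forces $e_1=0$, consistent with $e$. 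In both cases $t_n$ is a positive odd integer.

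\emph{Step 3: $\gcd(t_i,r)=1$.} Let $p\mid r$. If $p=2$, then $p\nmid t_i$ by Step 2. If $p$ is odd, then (\ref{item:primesinnum}) (valid for $n\ge1$) gives $v_p(r_i)=v_p(r)$, so $v_p(t_i)=0$.

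\emph{Step 4: $\gcd(t_i,t_j)=1$ for $i\neq j$.} Suppose a prime $p$ divides both $t_i$ and $t_j$. Then $p\mid r_i$ and $p\mid r_j$. Since $\gcd(r_n,s_n)=1$ with $s_n=s^{2^n}$, we have $p\nmid s$. Hence $v_p(f^i(0)-a)>0$ and $v_p(f^j(0)-a)>0$, and (\ref{item:repeatedprimes}) gives $v_p(a)>0$, i.e.\ $p\mid r$. This contradicts Step 3.

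The only mildly delicate points are the exceptional index $n=1$, where (\ref{item:aeven}) does not apply and so $e_1$ may exceed $1$, and checking that $r_n\neq 0$ so that $t_n>0$. Both are handled by the explicit formula for $r_1$ and the $2$-adic statements (\ref{item:aodd}) and (\ref{item:aeven}).
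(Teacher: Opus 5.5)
Your plan of reading everything off the recursion and items (\ref{item:aodd})--(\ref{item:repeatedprimes}) of Proposition~\ref{prop:family1primes} is how the paper intends the corollary; it gives no separate proof. Steps 2--4 are fine.

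\textbf{The gap is in Step 1.} When $r$ is even, $r+2s$ is even, not odd. Worse, the conclusion that step is meant to support is false in one case. Since $r_1=-r(r+2s)$, we have $r_1=0$ exactly when $r=-2s$. With $\gcd(r,s)=1$ and $s>0$ this forces $s=1$, $r=-2$, i.e.\ $a=-2$. There $c=-2$ and $f(0)=-2=a$, so $r_1=0$ and $v_2(r_1)=\infty$. Your $e_1$ is then undefined, and no $t_1>0$ with $|r_1|=2^{e_1}|r|t_1$ exists. The paper's remark after Proposition~\ref{prop:family1signs} records $f(0)-a=0$ at $a=-2$.

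So the corollary as literally stated fails at $a=-2$, and no argument can prove $r_1\neq 0$ in general. The fix is to add the hypothesis $a\neq -2$. This costs nothing for Theorem~\ref{thm:main1}: there $a-c=a(a+2)$ is assumed not to be a square, which excludes $a=-2$ since $0$ is a square, and $-2$ is also excluded by the definition of $\delta$. With $a\neq 0,-2$ you have $r+2s\neq 0$. Then $e_1=v_2(r+2s)$, $t_1=|r+2s|/2^{e_1}$ is a positive odd integer, and the rest of your argument goes through unchanged.

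\textbf{A smaller point.} The corollary defines $e$ only for $v_2(a)\geq 0$, but Theorem~\ref{thm:main1} uses it with $e=0$ when $v_2(a)<0$, e.g.\ $a=1/2$. In that case item (\ref{item:aodd}) does not apply as stated, since its hypothesis is $v_2(a)=0$. Your appeal to it for ``$r$ odd $\Rightarrow$ $r_n$ odd'' should be replaced by the congruence $r_{n+1}\equiv r_n^2 \pmod 2$, read directly from the recursion. Steps 3 and 4 then work verbatim with $e=0$, so your argument would cover every case the theorem actually uses.
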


In the next proposition, we show for a polynomial of the form $f(x)=x^2-1+a-a^2$, the only primes that divide the numerator of $f^n(0)-a$ for more than one $n$ are those dividing the numerator of $2a$. For this family, controlling the $p$-adic valuation of repeated prime factors is difficult, except for the case of controlling the $2$-adic valuation if $v_2(a)=1$ or $0$.

\begin{proposition}\label{prop:family2primes} Fix a polynomial in the family $f(x)=x^2+c$ with $c=-1+a-a^2$. Let $p\in \mathbb{Z}$ be prime. 
\begin{enumerate}
\item\label{item:primesindenom_fam2}  The following are equivalent:
  \begin{itemize}
    \item $v_p(a)<0$,
    \item $v_p(f^n(0)-a)<0$ for some $n\geq 1$,
    \item $v_p(f^n(0)-a)<0$ for all $n\geq 1$.
   \end{itemize}
\item\label{item:2innum_fam2} If $v_2(a)>0$, then $v_2(f^n(0)-a)>0$ for all even $n\geq 2$ and $v_2(f^n(0)-a)=0$ for odd $n\geq 1$.
\item\label{item:a2mod4_fam2} If $v_2(a)=1$, then $v_2(f^n(0)-a)=2$ for all even $n\geq 2$ and $v_2(f^n(0)-a)=0$ for odd $n\geq 1$.
\item\label{item:aodd_fam2} If $v_2(a)=0$, then $v_2(f^n(0)-a)=1$ for all odd $n\geq 1$ and $v_2(f^n(0)-a)=0$ for even $n\geq 0$.
\item\label{item:repeatedprimes_fam2} If $v_p(f^m(0)-a), v_p(f^n(0)-a)>0$ for $m,n\geq 1$ with $m\neq n$, then $v_p(2a)>0$.
\end{enumerate}
\end{proposition}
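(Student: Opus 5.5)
The approach follows the proof of Proposition~\ref{prop:family1primes}. Write $a=\frac{r}{s}$ and $f^n(0)-a=\frac{r_n}{s_n}$ in lowest terms with $s,s_n>0$, so that $r_0=-r$ and $s_0=s$. Setting $x_n=f^n(0)-a$, the choice $c=-1+a-a^2$ gives
\[x_{n+1}=(x_n+a)^2-1+a-a^2-a=x_n^2+2ax_n-1.\]
I will first prove by induction that $s_n=s^{2^n}$ and
\[r_{n+1}=r_n^2+2rr_ns^{2^n-1}-s^{2^{n+1}}.\]
For the inductive step, assume $\gcd(r_n,s)=1$. Every term of $r_{n+1}$ except $r_n^2$ is divisible by $s$, so $\gcd(r_{n+1},s)=1$ whenever $s>1$; the case $s=1$ is trivial. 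This gives part (\ref{item:primesindenom_fam2}) exactly as before.

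For the $2$-adic parts, I will work directly with the recursion for $r_n$.

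\textbf{Part (\ref{item:2innum_fam2}).} If $v_2(a)>0$, then $r$ is even and $s$ is odd. Reducing modulo $2$ gives $r_{n+1}\equiv r_n^2+1$. Since $r_0=-r$ is even, the parity of $r_n$ alternates: even for even $n$, odd for odd $n$.

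\textbf{Part (\ref{item:a2mod4_fam2}).} Now $v_2(r)=1$. Take $n$ even with $n\geq 2$, so $r_{n-1}$ is odd. Then $r_{n-1}^2\equiv 1\equiv s^{2^n}\pmod 8$, hence $v_2(r_{n-1}^2-s^{2^n})\geq 3$. The middle term $2rr_{n-1}s^{2^{n-1}-1}$ has $2$-adic valuation exactly $2$. Therefore $v_2(r_n)=2$.

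\textbf{Part (\ref{item:aodd_fam2}).} Here $r$ and $s$ are both odd, so $r_0$ is odd. I will induct using two cases.
\begin{itemize}
\item If $r_n$ is odd, then $r_n^2-s^{2^{n+1}}\equiv 0\pmod 8$, while the middle term has $2$-adic valuation exactly $1$. Hence $v_2(r_{n+1})=1$.
\item If $v_2(r_n)=1$, then $r_{n+1}$ is even plus even minus odd, so it is odd.
\end{itemize}
This gives the claimed alternation, with $v_2=0$ at even $n$ and $v_2=1$ at odd $n$.

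\textbf{Part (\ref{item:repeatedprimes_fam2}).} By part (\ref{item:primesindenom_fam2}), $p\nmid s$: if $p\mid s$ then every $f^n(0)-a$ would have negative valuation. So $f\in\bZ_{(p)}[x]$ and $a\in\bZ_{(p)}$, and Lemma~\ref{lem:common_primes} gives some $\ell\geq 1$ with $v_p(f^\ell(a)-a)>0$. By construction $f^\ell(a)\in\{a-1,-a\}$, so $f^\ell(a)-a\in\{-1,-2a\}$. Since $v_p(-1)=0$, we must have $v_p(2a)>0$.

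I expect the main obstacle to be the mod $8$ bookkeeping in parts (\ref{item:a2mod4_fam2}) and (\ref{item:aodd_fam2}). The key point there is that the two odd squares cancel modulo $8$, so the middle term alone determines the valuation.
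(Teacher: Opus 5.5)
Your proposal is correct and follows essentially the same route as the paper: the numerator recursion $r_{n+1}=r_n^2+2rr_ns^{2^n-1}-s^{2^{n+1}}$, parity alternation plus the cancellation of odd squares modulo $8$ for the $2$-adic parts, and Lemma~\ref{lem:common_primes} together with $f^\ell(a)-a\in\{-1,-2a\}$ for part (5). One small slip: at $n=0$ the middle term $2rr_0s^{0}=-2r^2$ is not divisible by $s$. However, $r_1=-r^2-s^2$ is plainly coprime to $s$, so the induction for $s_n=s^{2^n}$ still goes through.
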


\begin{proof}
Write $a=\frac{r}{s}$ with $r,s\in \mathbb{Z}$, $s>0$, and $\gcd(r,s)=1$. Similarly, write $f^n(0)-a=\frac{r_n}{s_n}$ with $r_n,s_n\in \mathbb{Z}$, $s_n>0$, and $\gcd(r_n,s_n)=1$.

We first show $s_n=s^{2^n}$ by induction on $n$. Note, this holds for $n=0$. Suppose $s_n=s^{2^n}$. Then 
\begin{align*}
\frac{r_{n+1}}{s_{n+1}}&=f^{n+1}(0)-a\\
&=f(f^n(0)-a+a)-a\\
&=f\left(\frac{r_n}{s^{2^n}}+\frac{r}{s}\right)-\frac{r}{s}\\
&=\left(\frac{r_n}{s^{2^n}}+\frac{r}{s}\right)^2-\frac{r^2}{s^2}+\frac{r}{s}-1-\frac{r}{s}\\
&=\frac{r_n^2+2r_nrs^{2^n-1}-s^{2^{n+1}}}{s^{2^{n+1}}}.
\end{align*}
Our induction hypothesis implies $\gcd(r_n,s)=1$, hence $\gcd(r_n^2+2r_nrs^{2^n-1}-s^{2^{n+1}},s^{2^{n+1}})=1$ and we have $s_{n+1}=s^{2^{n+1}}$. Thus, (\ref{item:primesindenom_fam2}) holds. We also get the recursive formula \[r_{n+1}=r_n^2+2r_nrs^{2^n-1}-s^{2^{n+1}}\] for the numerator of $f^{n+1}(0)-a$.

Now consider the $2$-adic valuation of $f^n(0)-a$. Suppose $v_2(a)>0$, so we have $r_0=-r$ is even and $s$ is odd. Then $r_1=-r^2-s^2$ is odd. Continuing by induction on $n$, if $r_n$ is odd, then $r_{n+1}=r_n^2+2r_nrs^{2^n-1}-s^{2^{n+1}}$ is even, and if $r_n$ is even then $r_{n+1}$ is odd, proving (\ref{item:2innum_fam2}). Further, if $v_2(a)=1$ and $n$ is even, then $r_{n-1}$ is odd as noted above and 
\[v_2(r_{n})=v_2(r_{n-1}^2+2r_{n-1}rs^{2^{n-1}-1}-s^{2^{n}})=v_2(2r_{n-1}rs^{2^{n-1}-1})=2,\] 
since $v_2(r_{n-1}^2-s^{2^{n}})\geq 3$, as the difference of two odd squares is divisible by $8$. Thus, (\ref{item:a2mod4_fam2}) holds. 

On the other hand, if $v_2(a)=0$, then $r$ and $s$ are both odd. So we have $r_0=-r$ is odd and $r_1=-r^2-s^2$ is even. The inductive argument above proves $r_n$ will alternate between even and odd values. Further, if $n$ is odd, we have $r_{n-1}$ is odd and so
\[v_2(r_{n})=v_2(r_{n-1}^2+2r_{n-1}rs^{2^{n-1}-1}-s^{2^{n}})=v_2(2r_{n-1}rs^{2^{n-1}-1})=1,\] as again, $v_2(r_{n-1}^2-s^{2^{n}})\geq 3$. Thus, (\ref{item:aodd_fam2}) holds.

Finally, suppose $v_p(f^m(0)-a), v_p(f^n(0)-a)>0$ for $m\neq n$. Note, $p\nmid s$ by (\ref{item:primesindenom_fam2}), so $f(x)\in \bZ_{(p)}$ and $a\in \bZ_{(p)}$. Hence by Lemma~\ref{lem:common_primes}, $v_p(f^\ell(a)-a)>0$ for some $\ell\geq 1$. We have $f^\ell(a) = -a$ or $a-1$ for each $\ell$, so $f^\ell(a)-a = -2a$ or $-1$, proving (\ref{item:repeatedprimes_fam2}).
\end{proof}

\section{Evaluating the signs of elements in the adjusted critical orbits}\label{sec:signs}

\begin{proposition}\label{prop:family1signs}
Fix a polynomial $f(x)=x^2+c$ with $c=-a-a^2$. 
\begin{enumerate}
\item If $a\in (-2,0)$, then $f^n(0)-a>0$ for all $n\geq 1$.
\item If $a\in (-\infty, -2]\cup[1,\infty)$, then $f^n(0)-a>0$ for all $n\geq 2$.
\item If $a\in (0, \beta)$, then $f^n(0)-a<0$ for all $n\geq 1$.
\end{enumerate}
where $\beta =\frac{1}{3} (-2 + (19 - 3\sqrt{33})^\frac{1}{3} + (19 + 3\sqrt{33})^\frac{1}{3}) \approx 0.839$ is the non-zero real root of $f^2(0)-a$.
\end{proposition}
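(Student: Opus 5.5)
The plan is to work directly with the identity
\[
f(x)-a = x^2 - a^2 - 2a = x^2 - a(a+2),
\]
so that the sign of $f^n(0)-a$ is the sign of $\bigl(f^{n-1}(0)\bigr)^2 - a(a+2)$. Two further facts will be used throughout. First, the fixed points of $f$ are $-a$ and $1+a$, since $1-4c=(1+2a)^2$. Second,
\[
f^2(0)-a = a(a^3+2a^2-2),
\]
which shows that $\beta$ is the real root of $a^3+2a^2-2$. Each of the three parts then reduces to controlling $|f^{n-1}(0)|$ relative to $\sqrt{a(a+2)}$.

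For part (1), with $a\in(-2,0)$, we have $a(a+2)<0$. Hence $f(x)-a>0$ for every real $x$, and in particular $f^n(0)-a>0$ for all $n\geq 1$. Nothing more is needed.

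For part (2), the idea is that the critical orbit escapes past the larger fixed point and then increases monotonically.
\begin{itemize}
\item If $a\geq 1$, the larger fixed point is $1+a$. I will check that $f^2(0)=(a^2+a)(a^2+a-1)\geq 1+a$. This is equivalent to $a(a^2+a-1)\geq 1$, which holds for $a\geq 1$, with equality at $a=1$.
\item If $a\leq -2$, the larger fixed point is $-a$. Substituting $u=-a\geq 2$, I will check that $f^2(0)+a = u\bigl[(u-1)(u^2-u-1)-1\bigr]\geq 0$.
\end{itemize}
In both cases, let $p$ be the larger fixed point. On $[p,\infty)$ we have $f(x)\geq x$, and $f(x)\geq p$ because $f$ is increasing on $[0,\infty)$ and $f(p)=p$. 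So $f^n(0)\geq p$ for all $n\geq 2$. Finally $p>a$: for $a\geq1$ this is $1+a>a$, and for $a\leq -2$ it is $-a>0>a$. Hence $f^n(0)-a>0$ for all $n\ge2$.

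For part (3), with $a\in(0,\beta)$, set $c=-a-a^2<0$. I will show that the interval $K=[c,-c]$ is forward invariant under $f$. For $x\in K$, we have $c\leq f(x)\leq c^2+c$. The upper bound satisfies $c^2+c\leq -c$, since this is equivalent to $|c|\leq 2$, which is clear for $a<\beta<1$. Since $0\in K$, it follows that $|f^{n-1}(0)|\leq |c|$ for all $n\geq 1$. It then suffices to show $c^2<a(a+2)$. Dividing by $a>0$, this is $a(1+a)^2<a+2$, i.e. $a^3+2a^2-2<0$, which holds exactly for $0<a<\beta$. This gives $f^n(0)-a<0$ for all $n\geq1$.

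The main obstacle is locating the right invariant set and comparison in part (3). The point is to recognize that $\beta$ is precisely the threshold where $|c|$ reaches $\sqrt{a(a+2)}$. Once that is seen, the remaining work is elementary polynomial inequalities.
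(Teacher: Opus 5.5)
Your proof is correct. Parts (1) and (3) follow the paper's argument.

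In (1), the paper likewise notes $f^n(0)\ge c>a$ because $c-a=-a(a+2)>0$.

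In (3), the paper uses the forward-invariant interval $[c,f^2(0)]$ and gets $f^2(0)\le -c$ from $f^2(0)<a<-c$; you use $[c,-c]$ and $|c|\le 2$. This is a cosmetic difference. Yours does cleanly separate the invariance, which holds for all $a\in(0,1]$, from the threshold $\beta$, which enters only in the final comparison $c^2<a(a+2)$.

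Part (2) is where you genuinely diverge.
\begin{itemize}
\item The paper shows the critical orbit is nondecreasing from the second step on. It computes $f^3(0)-f^2(0)=a^3(a-1)(a+1)^3(a+2)\ge 0$, inducts by squaring inequalities between positive numbers, and separately checks $f^2(0)>a$ on both intervals.
\item You use the explicit fixed points $-a$ and $1+a$, verify the single inequality $f^2(0)\ge p$ for the larger fixed point $p$, and invoke forward invariance of $[p,\infty)$. This avoids computing $f^3(0)$ and gives $f^n(0)-a\ge p-a>0$ with no separate check of $f^2(0)>a$.
\end{itemize}
The cost of your route is treating $a\ge 1$ and $a\le -2$ separately, whereas the paper's single factorization covers both intervals at once.

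The two key facts are in fact equivalent. Since $f(x)-x=(x+a)(x-1-a)$ and the smaller fixed point is negative here, for $x=f^2(0)>0$ one has $f^3(0)\ge f^2(0)$ exactly when $f^2(0)\ge p$.
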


\begin{proof}
First consider $a\in(-2,0)$. Then $c-a=-a^2-2a=-a(a+2)>0$, so $c>a$. Then for any $n \geq 1$, we have 
\[f^n(0)=f(f^{n-1}(0))=(f^{n-1}(0))^2+c\geq c>a,\] 
and hence $f^n(0)-a>0$.

Next, consider $a\in (-\infty,-2]\cup [1,\infty)$. We can compute  \[f^2(0)=a^4+2a^3-a>0\]  and 
\[f^3(0)-f^2(0)=a^3(a-1)(a+1)^3(a+2)\geq 0,\]  for all $a\in (-\infty,-2]\cup [1,\infty)$, so $f^3(0)\geq f^2(0)>0$. We can also check $f^2(0)-a>0$  for all $a\in (-\infty,-2]\cup [1,\infty)$. Now suppose we have $f^{n}(0)\geq f^{n-1}(0)>0$ for some $n\geq 3$. Then squaring both sides of $f^{n}(0)\geq f^{n-1}(0)$ and adding $c$, we have
\[f^{n+1}(0)=(f^{n}(0))^2+c\geq (f^{n-1}(0))^2+c=f^n(0)>0.\] 
Hence, $f^{n+1}(0)\geq f^n(0)>0$ for all $n\geq 2$, by induction on $n$. Using this and the fact that $f^2(0)>a$, we have \[\dots, f^{n+1}(0)\geq f^n(0)\geq \dots>f^2(0)> a.\] Therefore, $f^n(0)-a>0$ for all $n\geq 2$.

Now consider $a\in (0,\beta)$. First note \[f^2(0)-a=a^4+2a^3-2a<0\] on this interval. Next note $a< a^2+a=-c$. So we have \[f^2(0)<a< -c.\] We claim \[c\leq f^n(0)\leq f^2(0)\] for all $n\geq 1$. We have $f(0)=c\leq c^2+c=f^2(0)$, so the claim holds for $n=1$ and $n=2$. Now suppose $c\leq f^n(0)\leq f^2(0)$ for some $n\geq 2$. Then $c\leq f^n(0)\leq -c$, so \[0\leq (f^n(0))^2\leq c^2.\] Adding $c$ to each expression, we see \[c\leq (f^n(0))^2+c\leq c^2+c,\] that is, $c\leq f^{n+1}(0)\leq f^2(0)$ as desired. Now since $f^n(0)\leq f^2(0)<a$ we have $f^n(0)-a<0$ for each $n\geq 1$.

\end{proof}

\begin{remark} We note the following for completeness. If $a\in (\beta,1)$, the sequence $f^n(0)-a$ contains both positive and negative values.  If $a\in (-\infty,-2]\cup [1,\infty)$, then $f(0)-a=-a^2-2a<0$, except in the case $a=-2$, where we have $f(0)-a=0$.
\end{remark}

\begin{proposition}\label{prop:family2signs}
Fix a polynomial $f(x)=x^2+c$ with $c=-1+a-a^2$.
\begin{enumerate}
\item If $a\in \left(-\infty, \frac{1}{2}-\frac{\sqrt{5}}{2}\right)\cup\left(\frac{1}{2}+\frac{\sqrt{5}}{2}, \infty\right)$, then $f^n(0)-a>0$ for all $n\geq 2$.
\item If $a\in (0,\gamma)$, then $f^n(0)-a<0$ for all $n \geq 1$.
\end{enumerate}
where $\gamma = \frac{1}{3}\left(2-\frac{2}{(17+3*\sqrt{33})^{1/3}}+(17+3*\sqrt{33})^{1/3}\right) \approx 1.54$ is the nonzero real root of $f^2(0)-a$. 
\end{proposition}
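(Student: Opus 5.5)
The plan is to copy the strategy of Proposition~\ref{prop:family1signs}, writing everything in terms of $c=-1+a-a^2=-(a^2-a+1)$. Two identities will be used throughout:
\[f^2(0)=c^2+c=c(c+1), \qquad f^3(0)-f^2(0)=(f^2(0))^2-c^2=c^2(c^2+2c)=c^3(c+2).\]
Expanding gives
\[f^2(0)-a=(a^2-a+1)(a^2-a)-a=a^4-2a^3+2a^2-2a=a(a^3-2a^2+2a-2).\]
The cubic $a^3-2a^2+2a-2$ has derivative $3a^2-4a+2>0$, so it is strictly increasing and has exactly one real root, which is $\gamma$. Hence $f^2(0)-a$ is negative exactly on $(0,\gamma)$ and positive for $a<0$ or $a>\gamma$.

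\textbf{Part (1).} For $a$ outside $\left[\frac{1-\sqrt5}{2},\frac{1+\sqrt5}{2}\right]$ we have $a^2-a-1>0$, which is equivalent to $a^2-a+1>2$, i.e. $c<-2$.
\begin{itemize}
\item Then $f^2(0)=c(c+1)>0$.
\item Also $f^3(0)-f^2(0)=c^2(c^2+2c)\ge 0$, since $c^2+2c=c(c+2)>0$.
\item Finally $f^2(0)-a>0$: either $a<0$, where both factors of $a(a^3-2a^2+2a-2)$ are negative, or $a>\frac{1+\sqrt5}{2}\approx 1.618>\gamma$.
\end{itemize}
Now run the same induction as in Proposition~\ref{prop:family1signs}. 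If $f^n(0)\ge f^{n-1}(0)>0$, squaring and adding $c$ gives $f^{n+1}(0)\ge f^n(0)>0$. This yields the increasing chain $f^n(0)\ge\cdots\ge f^2(0)>a$ for all $n\ge 2$.

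\textbf{Part (2).} For $a\in(0,\gamma)$, the factorization gives $f^2(0)<a$. Since $\gamma<\frac{1+\sqrt5}{2}$, we have $\frac34\le a^2-a+1<2$, so $c\in(-2,-\tfrac34]$. In particular $c^2+2c\le 0$, which gives $f^2(0)\le -c$. Also $a>0$, so $f^2(0)<a<-c$.

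The claim to prove by induction is $c\le f^n(0)\le f^2(0)$ for all $n\ge1$.
\begin{itemize}
\item The cases $n=1,2$ hold because $c^2\ge0$.
\item For the step, $c\le f^n(0)\le f^2(0)\le -c$ gives $0\le (f^n(0))^2\le c^2$. Adding $c$ yields $c\le f^{n+1}(0)\le c^2+c=f^2(0)$.
\end{itemize}
Thus $f^n(0)\le f^2(0)<a$ for $n\ge2$. For $n=1$ directly, $f(0)-a=c-a=-1-a^2<0$.

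The only real obstacle is the sign analysis of $f^2(0)-a$, namely identifying $\gamma$ as the unique real root of the cubic and checking its position relative to the golden-ratio endpoints. This comes down to the monotonicity of the cubic and the numerical check $\gamma\approx1.54<1.618$. Everything else is the same monotonicity and interval-trapping arguments used for the first family.
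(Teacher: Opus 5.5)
Your proof is correct and follows the same structure as the paper's: an increasing chain $f^n(0)\ge\cdots\ge f^2(0)>a$ in part (1), and the trapping interval $c\le f^n(0)\le f^2(0)\le -c$ in part (2). Your factorizations $f^2(0)=c(c+1)$ and $f^3(0)-f^2(0)=c^3(c+2)$, together with the monotone-cubic argument, just make explicit the sign checks the paper leaves as ``one can check.''

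One small slip: the aside ``$a>0$, so $a<-c$'' is not justified by $a>0$, and it fails at $a=1$, where $a=-c$. In general only $a\le -c$ holds, from $(a-1)^2\ge 0$. It is harmless, because your induction only uses $f^2(0)\le -c$, which you correctly derived from $c(c+2)\le 0$.
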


\begin{proof} First suppose $a\in \left(-\infty, \frac{1}{2}-\frac{\sqrt{5}}{2}\right)\cup\left(\frac{1}{2}+\frac{\sqrt{5}}{2}, \infty\right)$. We can compute
\[f^2(0)=a^4-2a^3+2a^2-a>0\]
and
\[f^3(0)-f^2(0)=a^8 - 4 a^7 + 8 a^6 - 10 a^5 + 7 a^4 - 2 a^3 - 2 a^2 + 2 a - 1>0\]
for all $a\in \left(-\infty, \frac{1}{2}-\frac{\sqrt{5}}{2}\right)\cup\left(\frac{1}{2}+\frac{\sqrt{5}}{2}, \infty\right)$. Further, one can check $f^2(0)-a>0$ for all $a$ in these intervals. So we have $f^3(0)>f^2(0)>0$ and $f^2(0)>a$. Now suppose $f^{n}(0)>f^{n-1}(0)>0$ for some $n\geq 3$. Then squaring both sides and adding $c$, we have 
\[f^{n+1}(0)=(f^{n}(0))^2+c> (f^{n-1}(0))^2+c=f^n(0)>0.\] 
So by induction, we have $f^{n+1}(0)> f^n(0)>0$ for $n\geq 2$. Combining this with the fact that $f^2(0)>a$, we have \[\dots, f^{n+1}(0)> f^n(0)> \dots>f^2(0)> a.\] Therefore, $f^n(0)-a>0$ for all $n\geq 2$.

Now let $a\in (0,\gamma)$. One can check \[f^2(0)-a=a^4-2a^3+2a^2-2a<0\] for $a\in (0,\gamma)$. Then we note $0\leq (a-1)^2=a^2-2a+1$, so $a\leq a^2-a+1=-c$. So we have \[f^2(0)<a\leq -c.\] We claim \[c\leq f^n(0)\leq f^2(0)\] for all $n\geq 1$. Note, $f(0)=c\leq c^2+c=f^2(0)$, so the claim holds for $n=1$ and $n=2$. Now suppose $c\leq f^n(0)\leq f^2(0)$ for some $n\geq 2$. Then we have $c\leq f^n(0)\leq -c$, which implies \[0\leq (f^n(0))^2\leq c^2,\]  and adding $c$, we have \[c\leq (f^n(0))^2+c\leq c^2+c.\] Hence, $c\leq f^{n+1}(0)\leq f^2(0)$, and the claim follows by induction on $n$. Now since $f^n(0)\leq f^2(0)<a$ we have $f^n(0)-a<0$ for each $n\geq 1$, as desired.
\end{proof}

\begin{remark} If $a\in \left(\frac{1}{2}-\frac{\sqrt{5}}{2}, 0\right)\cup \left(\gamma, \frac{1}{2}+\frac{\sqrt{5}}{2}\right)$, the sequence $f^n(0)-a$ takes on both positive and negative values. Note, $\frac{1}{2}\pm \frac{\sqrt{5}}{2}$ are the non-zero roots of $f^3(0)-f^2(0)$.
\end{remark}

\section{Proofs of Main Theorems}\label{sec:proofs}

We are now ready to prove Theorem~\ref{thm:main1} and Theorem~\ref{thm:main2}. Our proofs rely on the following lemma.

\begin{lemma}\cite[Lemma 7.2]{BGJT_25PCFper}\label{lemma:surjective} Let $f(x)=x^2+c\in K[x]$, where $K$ is a field of characteristic not equal to $2$. Define
\begin{equation*}
D_i := \begin{cases}
a-c & \text{if } i=1\\
f^i(0)-a & \text{if } i\geq 2
\end{cases}.
\end{equation*}
Then $\rho: \Gal(\bar{K}/K)\rightarrow \Aut(T)$ is surjective if and only if for all $i\geq 1$, $D_i$ is not a square in $K(\sqrt{D_1},\dots \sqrt{D_{i-1}})$ (where for $i=1$, this means $D_1$ is not a square in $K$).
\end{lemma}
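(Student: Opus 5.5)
The plan is to argue level by level on the tree. Let $K_n=K(f^{-n}(a))$, so $K_0=K$ and the image of $\rho$ restricted to level $n$ is $\Gal(K_n/K)\le \Aut(T_n)$, where $T_n$ is the tree truncated at level $n$ and $|\Aut(T_n)|=2^{2^n-1}$. Surjectivity of $\rho$ is then equivalent to $[K_n:K_{n-1}]=2^{2^{n-1}}$ for every $n\ge1$. I would prove by induction on $n$ that, assuming $\Gal(K_{n-1}/K)=\Aut(T_{n-1})$, the equality $[K_n:K_{n-1}]=2^{2^{n-1}}$ holds if and only if $D_n$ is not a square in $K(\sqrt{D_1},\dots,\sqrt{D_{n-1}})$. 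This gives both directions of the lemma. (If some $D_i=0$, then $D_i$ is a square and the tree degenerates, so both sides fail; I would dispose of this case first.)

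\emph{Step 1: Kummer reduction.} For $\beta\in f^{-(n-1)}(a)$, the two preimages of $\beta$ are $\pm\sqrt{\beta-c}$. Hence $K_n=K_{n-1}(\sqrt{\beta-c}:\beta\in f^{-(n-1)}(a))$. By Kummer theory (using $\operatorname{char}K\neq2$), $[K_n:K_{n-1}]=|V|$, where $V\subseteq K_{n-1}^*/K_{n-1}^{*2}$ is the subgroup generated by the classes of the $\beta-c$. Let $X=f^{-(n-1)}(a)$, which has $2^{n-1}$ elements. There is a surjection $\mathbb{F}_2^X\to V$, and it is equivariant for $G=\Gal(K_{n-1}/K)$, which permutes $X$ transitively. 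Its kernel $W$ is a $G$-submodule. Since $G$ is a $2$-group, if $W\neq0$ it contains a nonzero $G$-fixed vector. By transitivity, the only nonzero fixed vector is the all-ones vector. Therefore $V$ has full rank $2^{n-1}$ if and only if $\prod_{\beta\in X}(\beta-c)$ is not a square in $K_{n-1}$. Note that this step does not yet need the inductive hypothesis beyond transitivity.

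\emph{Step 2: Identify the product.} Since $f^{n-1}(x)-a=\prod_{\beta\in X}(x-\beta)$, we get
\[\prod_{\beta\in X}(\beta-c)=(-1)^{2^{n-1}}\bigl(f^{n-1}(c)-a\bigr).\]
For $n\ge2$ this equals $f^n(0)-a=D_n$. For $n=1$ we have $X=\{a\}$ and the product is $a-c=D_1$. So $[K_n:K_{n-1}]$ is maximal if and only if $D_n\notin K_{n-1}^{*2}$.

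\emph{Step 3: Replace $K_{n-1}$ by $L_{n-1}=K(\sqrt{D_1},\dots,\sqrt{D_{n-1}})$.} This is the step I expect to be the main point. Since $D_n\in K$, $D_n$ is a square in $K_{n-1}$ if and only if $K(\sqrt{D_n})\subseteq K_{n-1}$, that is, if and only if $D_n$ is a square in the compositum $M$ of all quadratic subextensions of $K_{n-1}/K$. By Steps 1 and 2 applied at earlier levels, each $\sqrt{D_i}$ lies in $K_i\subseteq K_{n-1}$, so $L_{n-1}\subseteq M$. Under the inductive hypothesis, $\Gal(K_{n-1}/K)=\Aut(T_{n-1})$ is the iterated wreath product of $n-1$ copies of $\bZ/2$. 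Its maximal elementary abelian $2$-quotient has rank exactly $n-1$: each wreath step adds one generator, namely the sum map $\mathbb{F}_2^{X}\to\mathbb{F}_2$. Hence $[M:K]=2^{n-1}$. On the other hand, the inductive hypothesis says each $D_i$ is a nonsquare in $L_{i-1}$, so $[L_{n-1}:K]=2^{n-1}$. Therefore $L_{n-1}=M$, which closes the induction.

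For the converse direction, suppose $D_n$ is the first $D_i$ that is a square in $L_{n-1}$. Then levels $1,\dots,n-1$ are maximal by induction, so $L_{n-1}=M$ as above. Then $D_n$ is a square in $K_{n-1}$, so $[K_n:K_{n-1}]<2^{2^{n-1}}$ and $\rho$ is not surjective.

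The points needing care are the rank computation for the abelianization of $\Aut(T_{n-1})$ and the fixed-vector argument for $2$-groups. Both are standard, and I would cite or briefly verify them.
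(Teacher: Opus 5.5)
The paper gives no proof of its own here; it defers to \cite{BGJT_25PCFper} and the Odoni--Stoll--Jones argument, and your proposal is a correct version of exactly that standard argument. Kummer theory together with the fixed-vector property of $2$-groups reduces maximality of $[K_n:K_{n-1}]$ to $D_n=\prod_{\beta\in f^{-(n-1)}(a)}(\beta-c)$ being a nonsquare in $K_{n-1}$. The fact that $\Aut(T_{n-1})$ has maximal elementary abelian $2$-quotient of rank $n-1$ then identifies the quadratic subfields of $K_{n-1}$ with $K(\sqrt{D_1},\dots,\sqrt{D_{n-1}})$.
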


\subsection{Proof of Theorem~\ref{thm:main1}}

Let $a\in \bQ$, $c=-a-a^2$, and $f(x)=x^2+c$. We fix the following notation for this section: write $a=\frac{r}{s}$ with $r,s\in\bZ$, $\gcd(r,s)=1$, and $s>0$ and write $f^n(0)-a=\frac{r_n}{s_n}$ with $r_n,s_n\in\bZ$, $\gcd(r_n,s_n)=1$, and $s_n>0$.

\begin{proposition}\label{prop:family1mod3} 
If $r \not\equiv 0 \mod 3$, then $r_n \equiv 1\mod 3$ for all $n\geq 2$.
\end{proposition}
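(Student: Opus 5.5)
We argue directly from the recursion for the numerators obtained in the proof of Proposition~\ref{prop:family1primes}:
\[
r_{n+1}=r_n^2+2r_n r s^{2^n-1}-2rs^{2^{n+1}-1},\qquad r_0=-r,\quad s_n=s^{2^n}.
\]
The plan is to reduce everything modulo $3$. Assume $r\not\equiv 0\pmod 3$, so $r\equiv \pm 1$ and $r^2\equiv 1\pmod 3$. The divisibility of $s$ by $3$ matters, so we split into the two cases $3\mid s$ and $3\nmid s$.

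\textbf{Case $3\mid s$.} For $n\ge 1$ we have $2^n-1\ge 1$ and $2^{n+1}-1\ge 1$, so both $s$-terms in the recursion vanish mod $3$. Hence $r_{n+1}\equiv r_n^2\pmod 3$ for all $n\ge 1$. We also need $r_1\not\equiv 0$. For $n=0$ the recursion gives
\[
r_1=r^2-2r^2-2rs=-r^2-2rs\equiv -r^2\equiv -1 \pmod 3.
\]
Then $r_2\equiv r_1^2\equiv 1$, and inductively $r_n\equiv 1\pmod 3$ for all $n\ge 2$.

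\textbf{Case $3\nmid s$.} Now $s^2\equiv 1$, so $s^{2^n}\equiv 1$ for $n\ge 1$. This gives $s^{2^n-1}\equiv s^{-1}$ and $s^{2^{n+1}-1}\equiv s^{-1}\pmod 3$, where $s^{-1}\equiv s$ since $s\equiv\pm1$. Writing $u\equiv rs\equiv \pm 1$, the recursion becomes, for $n\ge 1$,
\[
r_{n+1}\equiv r_n^2+2ur_n-2u \equiv (r_n+u)^2-u^2-2u\equiv (r_n+u)^2-1-2u \pmod 3.
\]
The $n=0$ step needs separate treatment because $s^{2^0-1}=1$ rather than $s^{-1}$. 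Directly, $r_1=-r^2-2rs\equiv -1-2u$.

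Note that $u$ is exactly the residue of $rs$, which is the relevant quantity, and mod $3$ the expression $rs$ has the same residue as $r/s$. We check the two subcases:

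\begin{itemize}
\item If $u=1$: $r_1\equiv -3\equiv 0$. Then $r_2\equiv (0+1)^2-3\equiv 1$, and if $r_n\equiv 1$ then $r_{n+1}\equiv (2)^2-3\equiv 1$. So $r_n\equiv 1$ for all $n\ge 2$.
\item If $u=-1$: $r_1\equiv -1+2\equiv 1$. Then $r_{n+1}\equiv (r_n-1)^2-1+2=(r_n-1)^2+1$, so $r_n\equiv 1$ gives $r_{n+1}\equiv 1$. Here $r_n\equiv 1$ already holds for all $n\ge 1$.
\end{itemize}

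The main obstacle is bookkeeping: keeping the exponent of $s$ straight at the $n=0$ step, and handling the fact that $r_1$ may be $\equiv 0$ when $rs\equiv 1$. The latter is why the statement only claims $n\ge 2$. Before relying on the recursion, I would sanity-check the case computations against the example $a=1/5$ from the introduction.
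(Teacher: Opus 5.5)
Your proof is correct and follows the same route as the paper: you reduce the numerator recursion $r_{n+1}=r_n^2+2r_nrs^{2^n-1}-2rs^{2^{n+1}-1}$ modulo $3$, split on whether $3\mid s$, compute $r_1$ separately (noting it may be $\equiv 0$), and then induct from $r_2$. The only difference is organizational. Your substitution $u\equiv rs$ in the case $3\nmid s$ collapses the paper's four subcases ($r,s\equiv\pm1$) into two: $u\equiv 1$ gives $r_1\equiv 0$ and $u\equiv -1$ gives $r_1\equiv 1$, which matches the paper's case-by-case values exactly.
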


\begin{proof}
In the proof of Proposition~\ref{prop:family1primes}, we saw $s_n=s^{2^n}$ and $r_{n+1}=r_n^2+2r_n r s^{2^n-1}-2rs^{2^{n+1}-1}$.

First suppose $r\equiv 1\mod 3$. We consider the three cases $s\equiv 0,1,-1\mod 3$.  
\begin{itemize}
\item If $s\equiv 0\mod 3$, then $r_1\equiv -r^2-2rs\equiv -1^2-0 \equiv -1\mod 3$ and 
\[r_2\equiv r_1^2+2r_1 r s^{2^n-1}-2rs^{2^{n+1}-1}\equiv (-1)^2+0-0 \equiv 1\mod 3.\] 
It follows by induction for $n\geq 2$,
\[r_{n+1}\equiv r_n^2+2r_n r s^{2^n-1}-2rs^{2^{n+1}-1}\equiv 1^2+0-0\equiv 1 \mod 3.\]
\item If $s\equiv 1\mod 3$, we have $r_1\equiv  -r^2-2rs\equiv -1^2-2\equiv 0\mod 3$ and 
\[r_2\equiv r_1^2+2r_1 r s^{2^n-1}-2rs^{2^{n+1}-1}\equiv 0^2+0 -2 \equiv 1\mod 3.\] 
It follows by induction for $n\geq 2$,
\[r_{n+1}\equiv r_n^2+2r_n r s^{2^n-1}-2rs^{2^{n+1}-1}\equiv 1^2+2-2 \equiv 1 \mod 3.\]
\item If $s\equiv -1\mod 3$, we have $r_1\equiv  -r^2-2rs\equiv -1^2+2\equiv 1 \mod 3$.
It follows by induction for $n\geq 1$,
\[r_{n+1}\equiv r_n^2+2r_n r s^{2^n-1}-2rs^{2^{n+1}-1}\equiv 1+2\cdot(-1)-2\cdot(-1) \equiv 1 \mod 3.\]
\end{itemize}

Now suppose $r_0\equiv -1 \mod 3$.  We again consider the three cases $s\equiv 0,1,-1\mod 3$.  
\begin{itemize}
\item If $s\equiv 0\mod 3$, then $r_1\equiv -r^2-2rs\equiv -(-1)^2-0 \equiv -1\mod 3$ and 
\[r_2\equiv r_1^2+2r_1 r s^{2^n-1}-2rs^{2^{n+1}-1}\equiv (-1)^2+0-0 \equiv 1\mod 3.\] 
It follows by induction for $n\geq 2$,
\[r_{n+1}\equiv r_n^2+2r_n r s^{2^n-1}-2rs^{2^{n+1}-1}\equiv 1^2+0-0\equiv 1 \mod 3.\]
\item If $s\equiv 1\mod 3$, we have $r_1\equiv  -r^2-2rs\equiv -(-1)^2-2\cdot(-1)\equiv 1\mod 3$.
It follows by induction for $n\geq 2$,
\[r_{n+1}\equiv r_n^2+2r_n r s^{2^n-1}-2rs^{2^{n+1}-1}\equiv 1^2+2\cdot(-1)-2\cdot(-1) \equiv 1 \mod 3.\]
\item If $s\equiv -1\mod 3$, we have $r_1\equiv  -r^2-2rs\equiv -(-1)^2-2\equiv 0 \mod 3$ and 
\[r_2\equiv r_1^2+2r_1 r s^{2^n-1}-2rs^{2^{n+1}-1}\equiv 0^2+0-2\cdot(-1)\cdot(-1) \equiv 1\mod 3.\] 
It follows by induction for $n\geq 2$,
\[r_{n+1}\equiv r_n^2+2r_n r s^{2^n-1}-2rs^{2^{n+1}-1}\equiv 1^2+2\cdot(-1)\cdot(-1)-2\cdot(-1)\cdot(-1)\equiv 1 \mod 3.\]
\end{itemize}

\end{proof}

\begin{proposition}\label{prop:family1mod4}
If $r$ is odd, then $r_n\equiv 1\mod 4$ for $n\geq 2$. 
\end{proposition}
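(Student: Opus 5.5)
Use the recursion from the proof of Proposition~\ref{prop:family1primes},
\[r_{n+1}=r_n^2+2r_n r s^{2^n-1}-2rs^{2^{n+1}-1},\qquad r_0=-r,\]
and reduce it modulo $4$, exactly as Proposition~\ref{prop:family1mod3} reduces it modulo $3$. Since $r$ is odd, item~(\ref{item:aodd}) of Proposition~\ref{prop:family1primes} already gives that every $r_n$ is odd. So $r_n^2\equiv 1 \pmod 8$, and the recursion modulo $4$ becomes
\[r_{n+1}\equiv 1+2r s^{2^n-1}\,(r_n - s^{2^n}) \pmod 4 .\]
The whole question is therefore the parity of $s^{2^n-1}(r_n-s^{2^n})$. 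Since $2\cdot(\text{odd})\equiv 2 \pmod 4$ and $2\cdot(\text{even})\equiv 0 \pmod 4$, we want this product to be even for every $n\ge 1$.

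\textbf{Case $s$ even.} For $n\ge 1$ the exponent $2^n-1\ge 1$, so $s^{2^n-1}$ is even. Hence both correction terms vanish modulo $4$, and $r_{n+1}\equiv r_n^2\equiv 1 \pmod 4$ for all $n\ge 1$. This covers $n+1\ge 2$, as required.

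\textbf{Case $s$ odd.} Now $s^{2^n}$ is odd, and $r_n$ is odd by item~(\ref{item:aodd}). So $r_n-s^{2^n}$ is even, the correction term $2rs^{2^n-1}(r_n-s^{2^n})$ is divisible by $4$, and again $r_{n+1}\equiv 1 \pmod 4$ for all $n\ge 1$ (indeed for $n\ge 0$). In particular $r_n\equiv 1 \pmod 4$ for $n\ge 2$.

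The step needing care is the even-$s$ case at $n=0$. There $s^{2^0-1}=s^0=1$ is odd, so $r_1$ is not controlled; this is exactly why the statement only claims $n\ge 2$. One must check that the induction starts at $n=1$ using only the oddness of $r_1$, and not its residue modulo $4$. No other obstacle is expected: the argument is a direct mod-$4$ reduction of the recursion.
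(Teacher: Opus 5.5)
Your argument is correct and is essentially the paper's proof: you reduce the recursion modulo $4$, and you show that the correction term $2rs^{2^n-1}(r_n-s^{2^n})$ vanishes modulo $4$ for $n\ge 1$ by splitting on the parity of $s$, which is exactly the paper's observation that the bracketed expression is even when $s$ is even and is a difference of odd numbers otherwise. One small citation fix: item~(\ref{item:aodd}) only applies when $v_2(a)=0$, i.e.\ when $s$ is odd, so in the even-$s$ case the oddness of $r_n$ should instead come from $\gcd(r_n,s^{2^n})=1$, or directly from $r_{n+1}\equiv r_n^2 \pmod 2$.
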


\begin{proof}
By Proposition~\ref{prop:family1primes}, we saw $r_{n+1}=r_n^2+2r_n r s^{2^n-1}-2rs^{2^{n+1}-1}$. So
\[r_1 = -r^2-2rs\equiv \pm 1 \mod 4.\]
Now assume $r_n$ is odd for some $n\geq 1$, then $4$ divides $2r_nrs^{2^{n}-1}-2rs^{2^{n+1}-1} = 2(r_nrs^{2^{n}-1}-rs^{2^{n+1}-1})$, as the expression inside the parentheses is even if $s$ is even or is a difference of odd numbers and hence is even otherwise.
Then for $n\geq 1$, \[r_{n+1}=r_{n}^2+2r_nrs^{2^{n}-1}-2rs^{2^{n+1}-1}\equiv r_{n}^2 \equiv 1\mod 4.\]
\end{proof}

\begin{proof}[Proof of Theorem~\ref{thm:main1}]
We have assumed $D_1=a-c$ is not a square in $\mathbb{Q}$, so by Lemma~\ref{lemma:surjective}, it suffices to show $D_i=f^i(0)-a = \frac{r_i}{s^{2^i}}$ is not a square in $\mathbb{Q}(\sqrt{D_1},\dots \sqrt{D_{i-1}})$ for $i\geq 2$. 

By Corollary~\ref{cor:r_nfam1} and Proposition~\ref{prop:family1signs}, we can write $r_1=\pm2^{e_1}|r|t_1$ and $r_i=(-1)^\delta 2^e|r|t_i$ for $i\geq 2$, where for all $i\geq 1$, $t_i>0$, $\gcd(t_i,r)=1$, $2\nmid t_i$, and $\gcd(t_i,t_j)=1$ for $i>j\geq 1$. Thus, it suffices to show $t_i$ is not a square in $\mathbb{Z}$ for $i\geq 2$. 

We have seen in Proposition~\ref{prop:family1mod3}, that if $r\not \equiv 0 \mod 3$, then $r_i\equiv (-1)^\delta 2^e|r|t_i\equiv 1\mod 3$ for $i\geq 2$. Hence if $(-1)^\delta 2^e |r|\equiv 2\mod 3$, then $t_i\equiv 2\mod 3$, and hence $t_i$ is not a square in $\mathbb{Z}$, proving (\ref{main1_cond1}).

Similarly, working modulo $4$, if $r$ is odd, we have $r_i\equiv (-1)^\delta 2^e |r|t_i \equiv 1\mod 4$ for $i\geq 2$ by Proposition~\ref{prop:family1mod4}. Hence, if $(-1)^\delta 2^e |r|\equiv 3\mod 4$, then $t_i\equiv 3\mod 4$ and hence $t_i$ is not a square in $\mathbb{Z}$, proving (\ref{main1_cond2}).

Finally,  suppose $(-1)^\delta 2^e |r| \mod q$ is not a quadratic residue for some prime $q$ dividing $s$. Then since $r_i\equiv (r_{i-1})^2 \mod q$ for $i\geq 2$ is a non-zero quadratic residue and $r_i=(-1)^\delta 2^e |r|t_i$, it follows that $t_i$ is a quadratic non-residue modulo $q$ and hence, $t_i$ is not a square, proving (\ref{main1_cond3}).
\end{proof}

\subsection{Proof of Theorem~\ref{thm:main2}}

Let $a\in \bQ$, $c=-1+a-a^2$, and $f(x)=x^2+c$. We fix the following notation for this section: write $a=\frac{r}{s}$ with $r,s\in \bZ$, $\gcd(r,s)=1$, and $s>0$ and write $f^n(0)-a=\frac{r_n}{s_n}$ with $r_n,s_n\in\bZ$, $\gcd(r_n,s_n)=1$, and $s_n>0$.

\begin{proof}[Proof of Theorem~\ref{thm:main2}] Let $D_1,D_2,\dots$ be defined as in Lemma~\ref{lemma:surjective}. We need to show $D_i$ is not a square in $\mathbb{Q}(\sqrt{D_1},\dots, \sqrt{D_{i-1}})$ for all $i$. This holds for $i=1$ by hypothesis. 

Note, in each of the cases we are considering, we have $s>r>0$, so $0<a<1$. Hence by Proposition~\ref{prop:family2signs}, $f^n(0)-a<0$ for all $n\geq 2$. 

First suppose $r=1$ and $s$ is even, then by Proposition~\ref{prop:family2primes}, condition~\ref{item:repeatedprimes_fam2}, we have $\gcd(r_i,r_j)=1$ for $i\neq j$. So it suffices to show $|r_i|=-r_i$ is not a square in $\bZ$ for $i\geq 2$. By the proof of Proposition~\ref{prop:family2primes}, we have 
$r_{n+1}=r_n^2+2r_nrs^{2^n-1}-s^{2^{n+1}}$. Note, $r_1$ is odd and if $r_n$ is odd for some $n\geq 1$, then 
\[r_{n+1}\equiv r_n^2\equiv 1\mod 4,\]
hence $r_n\equiv 1\mod 4$ for all $n\geq 2$ by induction on $n$.
Thus, $|r_i|=-r_i\equiv -1\mod 4$ for $i\geq 2$, which implies $|r_i|$ is not a square, so (\ref{main2_cond1}) holds.

Now let $r=2$. By Proposition~\ref{prop:family2primes} and Proposition~\ref{prop:family2signs}, we have
\begin{equation*}
r_i=\begin{cases}
-t_i & \text{if $i$ is odd}\\
-2^2t_i & \text{if $i$ is even}
\end{cases}
\end{equation*}
where $t_i>0$, $2\nmid t_i$, and $\gcd(t_i,t_j)=1$ for $i>j\geq 1$. It suffices to show $t_i$ is not a square in $\bZ$ for $i\geq 2$.

Suppose $s\equiv 1\mod 3$. We show $r_n\equiv 1\mod 3$ for $n\geq 1$. We have \[r_1\equiv -r^2-s^2\equiv -2^2-1^2\equiv 1\mod 3.\]
Now suppose $r_n\equiv 1\mod 3$ for some $n\geq 1$, then
\begin{align*}
r_{n+1}&\equiv r_n^2+2r_nrs^{2^n-1}-s^{2^{n+1}}\mod 3\\
&\equiv 1^2+2\cdot 1\cdot 2\cdot 1^{2^n-1}-1^{2^{n+1}}\mod 3\\
&\equiv 1+2\cdot 2-1\mod 3\\
&\equiv 1 \mod 3,
\end{align*}
hence, $r_n\equiv 1\mod 3$ for all $n\geq 1$ as desired.
Then for all $i\geq 2$, $t_i\equiv -r_i\equiv -1\mod 3$ is not a square modulo $3$ and hence is not a square in $\mathbb{Z}$, proving (\ref{main2_cond2}).

Finally, suppose there is some prime $q$ such that $q\mid s$ and $q\equiv 3\mod 4$. Then for $n\geq 1$, we have
\[r_{n+1}=r_n^2+2r_nrs^{2^n-1}-s^{2^{n+1}}\equiv r_n^2 \mod q.\]
Hence, for $i\geq 2$, $r_i$ is a square modulo $q$. Then for odd $i$, we have $-t_i$ is a square modulo $q$ and since $-1$ is not a square mod $q$, $t_i$ is also not a square mod $q$ and hence is not a square in $\mathbb{Z}$. Similarly, for even $i$, $-2^2t_i$ is a square modulo $q$, so $-2^2t_i$ is a square modulo $q$ and again we conclude $t_i$ is not a square in $\mathbb{Z}$, and (\ref{main2_cond3}) holds. 
\end{proof}

\newcommand{\etalchar}[1]{$^{#1}$}
\providecommand{\bysame}{\leavevmode\hbox to3em{\hrulefill}\thinspace}
\providecommand{\MR}{\relax\ifhmode\unskip\space\fi MR }
% \MRhref is called by the amsart/book/proc definition of \MR.
\providecommand{\MRhref}[2]{%
  \href{http://www.ams.org/mathscinet-getitem?mr=#1}{#2}
}
\providecommand{\href}[2]{#2}

\end{document}